\numberwithin{equation}{section}
\newtheorem{thm}{Theorem}[section]
\newtheorem{lem}[thm]{Lemma}
\newtheorem{prop}[thm]{Proposition}
{ \theoremstyle{definition}
\newtheorem{defin}[thm]{Definition}
\newtheorem{rmk}[thm]{Remark} }
\newcommand{\cM}{\mathcal{M}}
\newcommand{\ttau}{\tilde{\tau}}
\newcommand{\hP}{\hat{P}}
\newcommand{\N}{\mathbb{N}}
\newcommand{\Nz}{\N_0}
\newcommand{\Rset}{\mathbb{R}}
\newcommand{\bt}{\boldsymbol{t}}
\newcommand{\bm}{{\boldsymbol{m}}}
\newcommand{\bQ}{{\boldsymbol{Q}}}
\newcommand{\cR}{{\mathcal{R}}}
\newcommand{\hbm}{{{\bm}_{n-1}}}
\newcommand{\tbm}{{{\bm}_{n-2}}}
\newcommand{\hT}{\hat{T}}
\newcommand{\rL}{\mathrm{L}}
\newcommand{\myell}{n}
\newcommand{\supth}{^{\text{th}}}
\DeclareMathOperator{\Wr}{\operatorname{Wr}}
\begin{document}

\newcommand{\arXivNumber}{2008.02822}

\renewcommand{\PaperNumber}{016}

\FirstPageHeading

\ShortArticleName{Exceptional Legendre Polynomials and Confluent Darboux Transformations}

\ArticleName{Exceptional Legendre Polynomials\\ and Confluent Darboux Transformations}

\Author{Mar\'ia \'Angeles GARC\'IA-FERRERO~$^{\dag^1}$, David G\'OMEZ-ULLATE~$^{\dag^2\dag^3}$ and Robert MILSON~$^{\dag^4}$}

\AuthorNameForHeading{M.\'A.~Garc\'ia-Ferrero, D.~G\'omez-Ullate and R.~Milson}

\Address{$^{\dag^1}$~Institut f\"ur Angewandte Mathematik, Ruprecht-Karls-Universit\"at Heidelberg,\\
\hphantom{$^{\dag^1}$}~Im Neunheimer Feld 205, 69120 Heidelberg, Germany}
\EmailDD{\href{mailto:garciaferrero@uni-heidelberg.de}{garciaferrero@uni-heidelberg.de}}

\Address{$^{\dag^2}$~Departamento de Ingenier\'ia Inform\'atica, Escuela Superior de Ingenier\'ia,\\
\hphantom{$^{\dag^2}$}~Universidad de C\'adiz, 11519 Puerto Real, Spain}
\EmailDD{\href{david.gomezullate@uca.es}{david.gomezullate@uca.es}}
\Address{$^{\dag^3}$~Departamento de F\'isica Te\'orica, Universidad Complutense de Madrid, 28040 Madrid, Spain}

\Address{$^{\dag^4}$~Department of Mathematics and Statistics, Dalhousie University,\\
\hphantom{$^{\dag^4}$}~Halifax, NS, B3H 3J5, Canada}
\EmailDD{\href{mailto:rmilson@dal.ca}{rmilson@dal.ca}}

\ArticleDates{Received September 22, 2020, in final form February 03, 2021; Published online February 20, 2021}

\Abstract{Exceptional orthogonal polynomials are families of orthogonal polynomials that arise as solutions of Sturm--Liouville eigenvalue problems. They generalize the classical families of Hermite, Laguerre, and Jacobi polynomials by allowing for polynomial sequences that miss a finite number of ``exceptional'' degrees. In this paper we introduce a new construction of multi-parameter exceptional Legendre polynomials by considering the isospectral deformation of the classical Legendre operator. Using confluent Darboux transformations and a~technique from inverse scattering theory, we obtain a~fully explicit description of the operators and polynomials in question. The main novelty of the paper is the novel construction that allows for exceptional polynomial families with an arbitrary number of real parameters.}

\Keywords{exceptional orthogonal polynomials; Darboux transformations; isospectral deformations}

\Classification{33C47; 34L10; 34A05}

\section{Introduction and main results}

Exceptional orthogonal polynomials (XOPs) are complete families of
orthogonal polynomials that arise as eigenfunctions of a
Sturm--Liouville eigenvalue problem \cite{GKM09}. XOPs are more general
than classical OPs, because the degree sequence of the polynomial
family can have a~finite number of missing, ``exceptional'' degrees.
As in the classical theory, XOPs fall into three broad classes:
Hermite, Laguerre and Jacobi, depending on whether the domain of
orthogonality is the full line, the half-line or a finite
interval~\cite{GFGM19}. Unlike the classical case, the corresponding exceptional
second-order operator has rational rather than polynomial
coefficients.

Exceptional polynomials appear in mathematical physics as bound states
of exactly solvable rational extensions \cite{GGM14,OS09,Qu09} and
exact solutions to Dirac's equation \cite{SH14}. They appear also in
connection with super-integrable systems \cite{MQ13, PTV12} and
finite-gap potentials \cite{HV10}. From a mathematical point of view,
the main results are concerned with the full classification of
exceptional polynomials \cite{GFGM19, GKM12}, properties of their zeros
\cite{GMM13,Ho15,KM15}, and recurrence relations
\cite{Du15,GKKM16,MT15,Od15}.

At the time of this writing, the most general construction of
exceptional Jacobi polyno\-mials~\cite{Bo20, Du17} involves a~finite number of \emph{discrete} parameters and is given in terms of a~Wronskian-like determinant of classical Jacobi polynomials, indexed by two partitions. The
purpose of this note is to show that the class of exceptional
orthogonal polynomials is much richer than previously thought. We do
this by studying the class of exceptional Legendre polynomials, which
cannot be obtained using the standard approach of multi-step Darboux
transformations indexed by partitions. The main novelty of the new
families is that they contain an arbitrary number of \emph{continuous}
deformation parameters. Another innovation is the use of integral
rather than differential operators in the construction of the
exceptional polynomials.

\begin{defin}\label{def:LegOp}
Let $\tau=\tau(z)$ be a polynomial. We say that the operator
\begin{equation} \label{eq:hTdef}
 \hT(\tau) = \big(1-z^2\big)\left( D_z^2 - 2
 \frac{\tau_z}{\tau} D_z +
 \frac{\tau_{zz}}{\tau} \right)
 -2z D_z
\end{equation}
is an \emph{exceptional Legendre operator} if there exist polynomials $\big\{\hP_i(z)\big\}_{i\in\Nz}$ and
constants $\{\lambda_i\}_{i\in\Nz}$, where $\Nz=\{0, 1,\dots\}$, such that
\[ \hT(\tau) \hP_i = \lambda_i \hP_i \] and such that the degree
sequence $\big\{ \deg \hP_i\big\}_{i\in\Nz}$ is missing finitely many
``exceptional'' degrees~\cite{GKM09}.
\end{defin}

Note that, in making this definition, we are \emph{not} assuming that $\deg \hP_i =i$.

\begin{rmk}\label{rmk:orth}
As a direct consequence of this definition, if
$\tau(z)$ has no zeros on $[-1,1]$ and if the eigenvalues are
distinct, then the resulting eigenpolynomials are orthogonal relative
to the inner product
\begin{equation}\label{eq:orth}
 \int_{-1}^1 \frac{\hP_{i_1}(z)\hP_{i_2}(z)}{\tau(z)^2} {\rm d}z = 0,\qquad i_1\neq i_2.
\end{equation}
\end{rmk}
In this case, the eigenpolynomials $\big\{\hat P_i(z)\big\}_{i\in\Nz}$ may define a complete orthogonal polynomial system, which motivates the following definition.
\begin{defin}\label{def:LegPol}
Let $\tau(z)$ be a polynomial that does not vanish in $[-1,1]$. The set $\big\{\hP_i(z)\big\}_{i\in\Nz}$ is a family of \emph{exceptional Legendre polynomials} if
\begin{enumerate}\itemsep=0pt
\item[(i)] $\big\{\hP_i(z)\big\}_{i\in\Nz}$ are eigenfunctions of a Sturm--Liouville problem in $[-1,1]$.
\item[(ii)] $\big\{\deg\hP_i\big\}_{i\in\Nz}$ contains all but finitely many positive integers.
\item[(iii)] The polynomials $\big\{\hP_i(z)\big\}_{i\in\Nz}$ satisfy the orthogonality relation~\eqref{eq:orth}.
\item[(iv)] The polynomials $\big\{\hP_i(z)\big\}_{i\in\Nz}$ form a complete set in the Hilbert space $\rL^2\big([-1,1],\tau^{-2}{\rm d}z\big)$.
\end{enumerate}
\end{defin}

In other words, exceptional Legendre polynomials are just exceptional
polynomials defined in $[-1,1]$ with orthogonality weight
$W(z)=\tau(z)^{-2}$, where $\tau(z)$ is a polynomial not vanishing in
$[-1,1]$. It should be noted, however, that the standard construction
of exceptional Jacobi polynomials based on a multi-index determinant
labelled by two partitions \cite[equation~(5.1)]{Du17} does not allow
parameters $\alpha=\beta=0$ (see \cite[equation~(2.36)]{Bo20}). Thus, the
construction of exceptional Legendre polynomials requires a different
approach, which we present in this paper.

It is known \cite[Theorem~1.2]{GFGM19} that every exceptional operator
can be related to a classical Bochner operator by a finite number of
Darboux transformations. This is true in particular for an
exceptional operator having the form~\eqref{eq:hTdef}, where the
degree of $\tau(z)$ is equal to the number of exceptional degrees. The
new exceptional polynomial families introduced in this paper do not
invalidate the classification result \cite[Theorem~1.2]{GFGM19}, but
rather they highlight the fact that the full class of Darboux
transformations leading to exceptional polynomials is larger than
previously thought.

As a matter of fact, we will consider in this paper a new class of
exceptional operators that are obtained from the classical Legendre operator
\begin{equation} \label{eq:T0def}
 T := \hT(1)= \big(1-z^2\big) D_z^2 -2z D_z
\end{equation}
by the application of a finite number of confluent Darboux transformations (CDTs)~\cite{GQ15}, also known as the ``double commutator'' method~\cite{GT96}. A CDT applied
within a spectral gap of a~second-order self-adjoint operator allows
to add one eigenvalue to the spectrum.
We will relate~$T$ to~$\hT(\tau)$ by a chain of CDTs, but the
commutation procedure we consider is performed at an existing
eigenvalue. The resulting spectral transformation for every confluent pair ``deletes'' an existing eigenvalue and then ``adds'' it back.\footnote{This is true in a formal sense only, as the intermediate potential is singular.}
The overall effect is that of an isospectral transformation~\cite{KSW89}.

An important feature of confluent Darboux transformations is that
every confluent pair of transformations naturally introduces an extra
deformation parameter. Known instances of exceptional Jacobi
polynomials are indexed by discrete parameters and cannot be
continuously deformed into their classical counterparts. By contrast,
after performing $n$ CDTs on the classical Legendre operator~\eqref{eq:T0def} at distinct energy levels indexed by
$\bm=(m_1,\ldots, m_\myell)\in \Nz^\myell$ we will arrive at an
exceptional Legendre operator
\[T_\bm(\bt_\bm) = \hT(\tau_\bm(z;t_\bm))\]
that depends on $n$ real parameters $\bt_{\bm}=(t_{m_1},\ldots, t_{m_\myell}) \in\Rset^n$.
The polynomial eigenfunctions of $T_\bm(\bt_\bm)$ are exceptional Legendre polynomials $\{P_{\bm,i}(z;\bt_\bm)\}_{i\in \Nz}$, which depend on $n$ real parameters $\bt_{\bm}=(t_{m_1},\ldots, t_{m_\myell})$, and can be continuously deformed to the classical Legendre polynomials by letting $\bt_\bm\to 0$.

Adapting certain methodologies from the theory of inverse scattering
\cite{AM80, CASH17, Su85}, we are able to exhibit a determinantal
representation of $\tau_\bm(z;\bt_\bm)$ that is formally similar to the
construction of KdV multi-solitons. The difference here is that,
instead of dressing the zero potential, we isospectrally deform a~particular instance of the Darboux--Poschl--Teller potential
\cite{GB13} by modifying the normalizations of a finite number of the
corresponding bound states. Another feature of our approach is that,
rather than working with a Schr\"odinger operator, we remain in a
polynomial setting by utilizing the gauge and coordinate of the
Legendre operator. The result is a constructive procedure that can be
easily implemented using a computer algebra system.

\subsection{Notation and definitions}
The base case of the construction is the classical Legendre operator~$T$, shown in~\eqref{eq:T0def}, and the classical Legendre polynomials~\cite{szego}
\begin{equation} \label{eq:Pndef}
 P_i(z) := \frac{2^{-i}}{i!} D_z^i \big(z^2-1\big)^i = 2^{-i}\sum_{k=0}^i
 \binom{i}{k}^2 (z-1)^{i-k}(z+1)^{k},\qquad i\in \Nz.
\end{equation}
These classical orthogonal polynomials do have $\deg P_i =i$, they satisfy
the eigenvalue relation
\[ TP_i = -i(i+1)P_i,\qquad i\in \Nz , \] and they form an
$\rL^2$-complete orthogonal family relative to the inner product
\[ \int_{-1}^1 P_{i_1}(z) P_{i_2}(z) {\rm d}z = \frac{2}{2i_1+1} \delta_{i_1i_2},\qquad
 i_1,i_2\in \Nz.\]

Before we can state the main results of the paper, we would like to fix some notation conventions to be used throughout the paper.
Bold symbols such as $\bm$, $\bt$ or $\bQ$ will represent tuples of integers, real numbers or polynomials (one dimensional objects), while calligraphic symbols like~$\cR$ will denote matrices. To access the components of a vector or tensor we will employ square brackets, i.e., $[\cR]_{k\ell}$ denotes the $(k,\ell)$ entry of~$\cR$. In addition, given an $n\times n$ matrix~$\cR$ and integers $1\leq k<\ell\leq n$, we denote by $\cM_{k,\ell}(\cR)$ the $(\ell-k+1)\times(\ell-k+1)$ square submatrix of~$\cR$ that includes the intersection of rows and columns from~$k$ to~$\ell$.

If $\bm=(m_1,\dots,m_n)$ is an $n$-tuple and $k\in\{1,\dots,n\}$, we
will denote by $\bm^{\langle k \rangle}$ the $(n-1)$-tuple where the
element $[\bm]_k$ is removed, i.e., $\bm^{\langle k \rangle}=(m_1,\dots,m_{k-1},m_{k+1},\dots,m_n)$, and
by~$\bm_k$ the $k$-tuple formed by the first $k$ elements of~$\bm$,
i.e., $\bm_k=(m_1,\dots, m_k)$. In particular, we may write explicitly~$\bm_n$ instead of~$\bm$ whenever the context requires to emphasize the length of the tuple, mostly in the proofs by induction or
recurrence relations.

Associated to an $n$-tuple of integers $\bm=(m_1,\dots,m_n)\in\Nz^n$,
we will define the $n$-tuple of real parameters
$\bt_\bm=(t_{m_1},\dots,t_{m_n})\in \Rset^n$. Semicolons will be used
to separate objects of different nature. Commas will be used for
tuple concatenation, e.g., if $i_1,\dots, i_k\in \Nz$,
$(\bm, i_1,\dots,i_k)$ denotes the $(n+k)$-tuple
$(m_1,\dots,m_n, i_1,\dots,i_k,)$. Similarly, we have
$(\bt_{\bm},t_{i_1},\dots, t_{i_k})=\bt_{(\bm,i_1,\dots,i_k)}$. Often,
we will omit the parentheses when denoting $1$-tuples, e.g., $m_1$
instead of~$(m_1)$. Finally, we will use, depending on the context,
the following notation for derivatives of a function $f$ with respect
to~$z$: $D_zf$, $f'$ and~$f_z$.

With this notation in mind, we proceed to define the main objects of this paper.

\begin{defin}\label{def:objects}
Given an $n$-tuple $\bm\in\Nz^n$ and the associated $\bt_\bm\in\Rset^n$, we define
 $\cR_\bm(z;\bt_\bm)$ as the $n\times n$ matrix with polynomial entries given by
\begin{gather*}
[\cR_\bm(z;t_\bm)]_{k\ell}=\delta_{k\ell}+t_{m_\ell} R_{m_km_\ell}(z), \qquad k,\ell\in\{1,\dots, n\},
\end{gather*}
where
\begin{equation} \label{eq:Rijdef}
 R_{m_km_\ell}(z):= \int_{-1}^z P_{m_k}(u) P_{m_\ell}(u) {\rm d}u,
\end{equation}
and $P_i(z)$ denote the classical Legendre polynomials \eqref{eq:Pndef}.
We denote its determinant by
\begin{gather}\label{eq:taudef}
 \tau_{\bm}(z;\bt_\bm) := \det \cR_\bm(z;\bt_\bm).
\end{gather}
We define the $n$-tuple of polynomials
\begin{align}
\label{eq:Qmdef}
 \bQ_{\bm}^{\rm T}(z;\bt_{\bm}):= \tau_{\bm}(z;\bt_{\bm}) \cR_{\bm}(z;\bt_{\bm})^{-1} \big(P_{m_1}(z),\ldots, P_{m_\myell}(z)\big)^{\rm T},
 \end{align}
 Finally, for $i\in\Nz$, we define the polynomials
\begin{align}\label{eq:Pimdef}
P_{\bm;i}(z;\bt_\bm):=\big[\bQ_{(\bm,i)}(z;\bt_{(\bm,i)})\big]_{n+1}.
\end{align}
\end{defin}

Note that, by construction, $\tau_{\bm}(z;\bt_\bm)$ is symmetric in
$\bm$ and $\bQ_{\bm}(z;\bt_{\bm})$ is equivariant with respect to
permutations of $\bm$. In addition, $P_{\bm;i}(z;\bt_\bm)$ is
symmetric in $\bm$ and does not depend on $t_i$ since
$\tau_{(\bm,i)}(z;\bt_{(\bm,i)})
\big[\cR_{(\bm,i)}(z;\bt_{(\bm,i)})^{-1}\big]_{n+1,j}$ correspond to
the minors of the last column of $\cR_{(\bm,i)}(z;\bt_{(\bm;i)})$,
the only column where $t_i$ appears.

For example, for $m_1,m_2\in\Nz$ we have
\begin{gather*}
 \tau_{m_1}(z;t_{m_1})= 1+t_{m_1} R_{m_1m_1}(z),\\
 \cR_{(m_1,m_2)}(z;\bt_{(m_1,m_2)})=
 \begin{pmatrix}
 1+t_{m_1} R_{m_1m_1}(z)& t_{m_2} R_{m_1m_2}(z) \\ t_{m_1}
 R_{m_2m_1}(z)& 1+t_{m_2} R_{m_2m_2}(z)
 \end{pmatrix},\\
 \tau_{(m_1,m_2)}(z;\bt_{(m_1,m_2)})
= 1+t_{m_1} R_{m_1m_1}(z)+
 t_{m_2} R_{m_2m_2}(z)
 \\
 \hphantom{\tau_{(m_1,m_2)}(z;\bt_{(m_1,m_2)})=}{} + t_{m_1}t_{m_2}\big(R_{m_1m_1}(z) R_{m_2m_2}(z) - R_{m_1m_2}^2(z) \big),\\
 \bQ_{(m_1,m_2)}(z;\bt_{(m_1,m_2)}) =
 \begin{pmatrix}
 \big(1+t_{m_2} R_{m_2m_2}(z)\big)P_{m_1}(z) - t_{m_2}
 R_{{m_1}{m_2}}(z)P_{m_2}(z) \\
 \big(1+t_{m_1} R_{{m_1}{m_1}}(z)\big)P_{m_2}(z) - t_{m_1}
 R_{{m_2}{m_1}}(z)P_{m_1}(z)
 \end{pmatrix},\\
 P_{{m_1};i}(z;t_{m_1}) = \big(1+t_{m_1} R_{{m_1}{m_1}}(z)\big)P_i(z) -
 t_{m_1} R_{i{m_1}}(z) P_{m_1}(z), \qquad i\in\Nz.
\end{gather*}

After defining these objects, we are now ready to state the results.
\subsection{Main results}

The main result of this paper states that the polynomials $\{P_{\bm;i}(z;\bt_\bm)\}_{i\in\Nz}\!$ defined by \mbox{\eqref{eq:Rijdef}--\eqref{eq:Pimdef}} are exceptional Legendre polynomials, provided the real parameters $\bt_\bm$ satisfy certain constraints to ensure that $\tau_\bm(z;t_\bm)$ has constant sign on $z\in[-1,1]$.

\begin{thm} \label{thm:Teigen}
 For $\bm\in \Nz^n$, consider the operator
 \begin{equation*}
 T_\bm(\bt_\bm) := \hT(\tau_\bm(z;\bt_\bm)),
 \end{equation*}
given by \eqref{eq:hTdef} and \eqref{eq:taudef}.
 Then $T_\bm(t_\bm)$
 is an exceptional Legendre operator that satisfies
 \begin{equation} \label{eq:Teigen}
 T_\bm(\bt_\bm) P_{\bm;i}(z;\bt_\bm) = - i(i+1) P_{\bm;i}(z;\bt_\bm),\qquad i\in \Nz,
 \end{equation}
 with $P_{\bm;i}(z;\bt_\bm)$ as in~\eqref{eq:Pimdef}.
\end{thm}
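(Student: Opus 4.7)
The plan is to prove the eigenvalue equation by induction on $n=|\bm|$, interpreting the construction as an iterated chain of confluent Darboux transformations (CDTs) applied to the classical Legendre operator. The base case $n=0$ is precisely the classical relation $TP_i=-i(i+1)P_i$, since $\tau_\emptyset=1$, $T_\emptyset=T$, and $P_{\emptyset;i}=P_i$.

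For the inductive step, I would write $\bm_n=(\bm_{n-1},m_n)$ and exploit the bordered-matrix form
\[
\cR_{\bm_n}=\begin{pmatrix}\cR_{\bm_{n-1}}& v\\ u^{\rm T}& 1+t_{m_n}R_{m_nm_n}\end{pmatrix},\qquad v_k=t_{m_n}R_{m_km_n},\quad u_k=t_{m_k}R_{m_km_n}.
\]
Combining the Schur complement identity for $\det\cR_{\bm_n}$ with the rank-one derivative $\cR_\bm'=(P_{m_1},\dots,P_{m_n})^{\rm T}(t_{m_1}P_{m_1},\dots,t_{m_n}P_{m_n})$, Jacobi's formula $\tau_\bm'=\sum_k t_{m_k}P_{m_k}[\bQ_\bm]_k$, and the cofactor identity $\bQ_{\bm_{n-1}}^{\rm T}=\tau_{\bm_{n-1}}\cR_{\bm_{n-1}}^{-1}(P_{m_1},\dots,P_{m_{n-1}})^{\rm T}$, I aim to establish the key algebraic identity
\[
\left(\frac{\tau_{\bm_n}}{\tau_{\bm_{n-1}}}\right)'=t_{m_n}\,\frac{P_{\bm_{n-1};m_n}^{\,2}}{\tau_{\bm_{n-1}}^{\,2}}.
\]
Integration from $z=-1$, where each $R_{ij}(-1)=0$, then yields the clean recursion
\[
\frac{\tau_{\bm_n}(z)}{\tau_{\bm_{n-1}}(z)}=1+t_{m_n}\!\int_{-1}^{z}\!\frac{P_{\bm_{n-1};m_n}(u)^2}{\tau_{\bm_{n-1}}(u)^2}\,du,
\]
and an analogous last-row cofactor expansion of $\bQ_{(\bm_n,i)}$ gives the companion formula
\[
P_{\bm_n;i}=\frac{\tau_{\bm_n}}{\tau_{\bm_{n-1}}}P_{\bm_{n-1};i}-t_{m_n}P_{\bm_{n-1};m_n}\!\int_{-1}^{z}\!\frac{P_{\bm_{n-1};m_n}(u)P_{\bm_{n-1};i}(u)}{\tau_{\bm_{n-1}}(u)^2}\,du.
\]
Together, these two recursions are precisely the polynomial-gauge image of a single CDT of $T_{\bm_{n-1}}$ at the eigenvalue $\lambda_{m_n}=-m_n(m_n+1)$ with seed eigenfunction $P_{\bm_{n-1};m_n}$, whose eigenfunction status is provided by the inductive hypothesis.

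With the recursions in place, the eigenvalue identity $T_{\bm_n}P_{\bm_n;i}=-i(i+1)P_{\bm_n;i}$ follows by the standard CDT intertwining property: conjugation by the multiplier $\tau_{\bm_n}/\tau_{\bm_{n-1}}$ transforms $\hT(\tau_{\bm_n})$ into $\hT(\tau_{\bm_{n-1}})$ modulo the potential shift dictated by $\log(\tau_{\bm_n}/\tau_{\bm_{n-1}})$ that is the signature of a CDT at the chosen seed, and the companion recursion for $P_{\bm_n;i}$ is exactly the map this transformation induces at the level of eigenfunctions. Applying the intertwining to $P_{\bm_{n-1};i}$ and invoking the inductive hypothesis closes the induction.

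The principal technical hurdle is extracting the clean square $P_{\bm_{n-1};m_n}^{\,2}$ from the Schur complement $1+t_{m_n}R_{m_nm_n}-u^{\rm T}\cR_{\bm_{n-1}}^{-1}v$. The required identification is that the vector $\cR_{\bm_{n-1}}^{-1}(R_{m_1m_n},\dots,R_{m_{n-1}m_n})^{\rm T}$ has $z$-derivative equal to $P_{m_n}\bQ_{\bm_{n-1}}^{\rm T}/\tau_{\bm_{n-1}}$ plus a compensating term (obtained via $(\cR_{\bm_{n-1}}^{-1})'=-\cR_{\bm_{n-1}}^{-1}\cR_{\bm_{n-1}}'\cR_{\bm_{n-1}}^{-1}$ and $R_{ij}'=P_iP_j$), and that contracting with $u^{\rm T}$ together with the permutation equivariance of $\bQ$ produces the perfect square. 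Once this algebraic identity and its companion for $P_{\bm_n;i}$ are secured, the rest of the proof is a routine CDT argument, and the polynomiality of $P_{\bm_n;i}$ is automatic from its cofactor definition.
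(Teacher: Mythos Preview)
Your proposal is correct and follows essentially the same route as the paper: establish by induction the one-step recursions
\[
\tau_{\bm_n}=\big(1+t_{m_n}R_{\bm_{n-1};m_nm_n}\big)\tau_{\bm_{n-1}},\qquad
P_{\bm_n;i}=\big(1+t_{m_n}R_{\bm_{n-1};m_nm_n}\big)P_{\bm_{n-1};i}-t_{m_n}R_{\bm_{n-1};im_n}P_{\bm_{n-1};m_n},
\]
with $R_{\bm_{n-1};i_1i_2}=\int_{-1}^{z}P_{\bm_{n-1};i_1}P_{\bm_{n-1};i_2}\tau_{\bm_{n-1}}^{-2}\,{\rm d}u$, and then invoke the CDT intertwining to transport the eigenvalue relation from step $n-1$ to step $n$. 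The only organizational difference is that the paper separates these two ingredients (the abstract CDT step in Propositions~\ref{prop:RCDT}--\ref{prop:piij}, and the identification of the determinantal objects with the recursive ones in Proposition~\ref{prop:taurec}), and for the latter it uses Sylvester's determinant identity on the nested inverses (Lemma~\ref{prop:Rnrec}) rather than your direct differentiation of the Schur complement via Jacobi's formula and $(\cR^{-1})'=-\cR^{-1}\cR'\cR^{-1}$; the two linear-algebra arguments are equivalent and yield the same recursions.
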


In light of \eqref{eq:Teigen}, we may refer to $P_{\bm;i}(z;\bt_\bm)$,
where $i\in \Nz$ varies and $\bm$ and $\bt_\bm$ are fixed, as
exceptional Legendre polynomials. This requires according to Remark~\ref{rmk:orth} and Definition~\ref{def:LegPol} that $\tau_\bm(z;t_\bm)$ does not
vanish on $[-1,1]$. The following theorem gives necessary and
sufficient conditions for this to be true. In that case, like their
classical counterparts, the polynomials $\{P_{\bm;i}(z;\bt_\bm)\}_{i\in\Nz}$ are
orthogonal and complete.

\begin{thm} \label{thm:Portho}
For $\bm\in\Nz^n$ with $m_1,\ldots, m_\myell$ distinct,
the polynomial $\tau_\bm(z;\bt_\bm)$ in~\eqref{eq:taudef} has no zeros on $[-1,1]$ if and
 only if
\begin{equation}\label{eq:tconstraints}
 t_{m_j}>-m_j-\frac 1 2,\qquad j\in \{ 1,\dots,n\}.
 \end{equation}
 If the above conditions hold, then $\{P_{\bm;i}(z;\bt_\bm)\}_{i\in\Nz}$ are exceptional Legendre polynomials with
 \begin{gather*}
 \int_{-1}^1 \frac{P_{\bm;i_1}(u;\bt_\bm)P_{\bm;i_2}(u;\bt_\bm)}{\tau_\bm(z;\bt_\bm)^2} {\rm d}u
 = \frac{2}{1+2i_1 + 2(\delta_{i_1m_1}+ \cdots + \delta_{i_1m_\myell})
 t_{i_1}}\delta_{i_1i_2},\qquad\! i_1,i_2\in\Nz.
\end{gather*}
and $\rL^2$-completeness in $[-1,1]$ relative to the measure $\tau_\bm(z;\bt_\bm)^{-2} {\rm d}z$.
\end{thm}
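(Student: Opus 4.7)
The proof splits naturally into three parts: the characterization of when $\tau_\bm$ is zero-free on $[-1,1]$, the orthogonality relation together with the explicit normalization, and $\rL^2$-completeness.

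\textbf{Zero-freeness.} My plan is first to evaluate $\tau_\bm$ at the endpoints. Since $R_{m_km_\ell}(-1)=0$, we get $\cR_\bm(-1;\bt_\bm)=I_n$ and $\tau_\bm(-1;\bt_\bm)=1$. Using the distinctness of the $m_k$ and the classical Legendre orthogonality $R_{m_km_\ell}(1)=\tfrac{2}{2m_k+1}\delta_{k\ell}$, the matrix $\cR_\bm(1;\bt_\bm)$ is diagonal and
\[
\tau_\bm(1;\bt_\bm)=\prod_{k=1}^{n}\frac{2m_k+1+2t_{m_k}}{2m_k+1}.
\]
If any $t_{m_j}$ violates \eqref{eq:tconstraints} this factor is non-positive, and a continuity argument (homotopy of $\bt_\bm$ toward $0$, together with tracking of the zeros of $\tau_\bm(\cdot;\bt_\bm)$ as a polynomial of bounded degree in~$z$) forces a zero on $[-1,1]$, yielding necessity. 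For sufficiency, I would induct on $n$ via the Schur-complement factorization
\[
\tau_\bm(z;\bt_\bm)=\tau_{\bm^{\langle n\rangle}}(z;\bt_{\bm^{\langle n\rangle}})\bigl(1+t_{m_n}\rho_n(z;\bt_{\bm^{\langle n\rangle}})\bigr),
\]
where $\rho_n$ is a non-negative, non-decreasing function of $z$ on $[-1,1]$ with $\rho_n(-1;\cdot)=0$ and $\rho_n(1;\cdot)=\tfrac{2}{2m_n+1}$. Granting these properties, strict positivity of the second factor is equivalent to $t_{m_n}>-m_n-\tfrac12$, and sufficiency follows from the inductive hypothesis. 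The main technical obstacle is establishing this factorization and the sign and monotonicity of $\rho_n$; the natural strategy is to write $\cR_\bm=I_n+M(z)\,\mathrm{diag}(\bt_\bm)$ with $M(z)$ the Gram matrix of $P_{m_1},\ldots,P_{m_\myell}$ on $[-1,z]$, perform a block $(n-1)+1$ expansion, and use positive semi-definiteness of $M$ together with the monotonicity of its entries in $z$.

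\textbf{Orthogonality and norm.} Under \eqref{eq:tconstraints}, the operator $T_\bm(\bt_\bm)$ has coefficients regular on $[-1,1]$, and a direct computation shows that the specific combination in \eqref{eq:hTdef} is formally self-adjoint for the weight $\tau_\bm^{-2}$. Boundary terms in the integration by parts vanish at $z=\pm1$ because $(1-z^2)$ vanishes there and the polynomial eigenfunctions remain bounded; together with the distinct eigenvalues $-i(i+1)$ given by Theorem~\ref{thm:Teigen}, the standard Sturm--Liouville argument yields orthogonality. For the explicit normalization, my plan is to integrate directly using the decomposition $P_{\bm;i}=\tau_\bm P_i-\sum_k(\cdots)P_{m_k}$ visible in the $n=1$ example at the end of Section~1.1 and extended by induction on~$n$, reducing everything to integrals of products $P_iP_j$ and $P_iP_jR_{m_km_\ell}\tau_\bm^{-2}$. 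The Kronecker-$\delta$ correction $2(\delta_{i_1m_1}+\cdots+\delta_{i_1m_\myell})t_{i_1}$ in the denominator is the imprint of the CDT ``delete-and-readd'' procedure: when $i_1$ coincides with some $m_k$, the polynomial $P_{\bm;i_1}$ is the readded bound state whose norm gets rescaled by the confluent parameter $t_{i_1}$.

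\textbf{Completeness.} Because $\tau_\bm$ is continuous and strictly nonvanishing on the compact set $[-1,1]$, the weight $\tau_\bm^{-2}$ is bounded above and below by positive constants, so $\rL^2\bigl([-1,1],\tau_\bm^{-2}dz\bigr)$ coincides as a topological vector space with $\rL^2([-1,1],dz)$. By the Weierstrass theorem, polynomials are dense in the latter, so it suffices to check that $\mathrm{span}\{P_{\bm;i}\}_{i\in\Nz}$ is equal to the full polynomial algebra $\Rset[z]$. A degree count from the cofactor structure of \eqref{eq:Pimdef} shows that $\{\deg P_{\bm;i}\}_{i\in\Nz}$ is cofinite in $\Nz$, missing only finitely many ``exceptional'' degrees below the stable value, with degree strictly increasing for $i$ large; this forces the span to contain every monomial and completes the proof.
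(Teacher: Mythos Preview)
Your approach to zero-freeness via the Schur-complement factorization is essentially the paper's recursive route: the paper establishes (Proposition~\ref{prop:taurec}) that $\tau_{\bm_n}=\tau_{\bm_{n-1}}\bigl(1+t_{m_n}R_{\bm_{n-1};m_nm_n}\bigr)$ with $R_{\bm_{n-1};m_nm_n}(z)=\int_{-1}^z P_{\bm_{n-1};m_n}(u)^2\,\tau_{\bm_{n-1}}(u)^{-2}\,{\rm d}u$, from which nonnegativity, monotonicity, and the endpoint value $\rho_n(1)=\tfrac{2}{2m_n+1}$ are immediate; no separate Gram-matrix positivity argument is needed. Orthogonality by self-adjointness is exactly Remark~\ref{rmk:orth}, and the paper obtains the explicit norms through the same induction (Proposition~\ref{prop:recortho}) rather than by the direct integration you sketch, which would be considerably harder to carry out for general~$n$.

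Your completeness argument, however, contains a genuine error. You assert that a cofinite degree sequence ``forces the span to contain every monomial'', but this is exactly what fails for exceptional families: by Proposition~\ref{prop:degPtau}, the set $\{P_{\bm;i}\}_{i\in\Nz}$ misses $2(m_1+\cdots+m_n)+n$ degrees, so its linear span is a \emph{proper} subspace of $\Rset[z]$ of that codimension and certainly does not contain every monomial. A finite-codimensional subspace of a dense subspace need not itself be dense; for instance $\bigl\{p\in\Rset[z]:\int_{-1}^1 p\,{\rm d}z=0\bigr\}$ has codimension one in the polynomials but is not dense in $\rL^2[-1,1]$. So invoking Weierstrass and equivalence of the weighted and unweighted norms does not close the argument.

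The paper proves completeness by an entirely different mechanism: it is propagated through the CDT inductively in Proposition~\ref{prop:recortho}, by writing the completeness hypothesis as the distributional resolution of the identity $\sum_i\nu_i^{-1}\tfrac{\pi_i(z)}{\tau(z)}\tfrac{\pi_i(w)}{\tau(w)}=\delta(z-w)$ and verifying directly, using the relation $\pi_{m;i}/\tau_m=\pi_i/\tau-t\,\rho_{im}\,\pi_m/\tau_m$ and the identities for $\sum_i\nu_i^{-1}\rho_{ij}$ derived from it, that the analogous identity holds for the transformed system $\{\pi_{m;i},\tau_m,\nu_{m;i}\}$. This argument exploits the specific isospectral structure of the confluent Darboux step and is not reducible to a density-of-polynomials statement.
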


\begin{rmk}\label{rmk:degenerate}
Note that we could reformulate the above result without the assumption
that $m_1,\ldots, m_\myell$ are distinct. However, there is no extra benefit
in doing this, as demonstrated by Proposition~\ref{prop:duplicates} below.
Assuming that the indices $m_1,\ldots, m_\myell$ are all distinct does not
entail any loss of generality.
\end{rmk}

The rest of the paper is organized as follows: in Section~\ref{sec:1pair} we study exceptional Legendre operators connected by a single step rational confluent Darboux transformation, which involves in fact two Darboux transformations at the same factorization energy. We will iterate these results in Section~\ref{sec:iteration} to consider any number of confluent Darboux transformations and we will relate this construction with the objects in Definition~\ref{def:objects}, thus yielding the proofs of the main theorems.
Finally, in Section~\ref{sec:examples} we give some explicit examples of the new exceptional Legendre families.

\section{One step confluent Darboux transformations}\label{sec:1pair}

In this section we collect a number of relevant Propositions for the
proofs of the Theorems~\ref{thm:Teigen} and~\ref{thm:Portho}. We introduce the concept of a rational confluent Darboux transformation and we show that this transformation preserves the class of exceptional Legendre operators.

Before introducing rational confluent Darboux transformation, we recall $2$-step ordinary Darboux transformations between two operators $T_1$ and $T_2$ with rational coefficients. If $A_1$, $A_2$,~$B_1$,~$B_2$ are first-order differential operators with rational coefficients, and $\lambda_1$, $\lambda_2$ two constants, consider the following $2$-step rational Darboux transformation:
\begin{gather*}
T_1 = B_1A_1+\lambda_1,\\
\tilde T = A_1B_1+\lambda_1 = A_2B_2+\lambda_2,\\
T_2= B_2A_2+\lambda_2.
\end{gather*}
The first transformation at energy level $\lambda_1$ maps $T_1$ to $\tilde T$ and is state-deleting, while the second transformation at energy level $\lambda_2$ maps $\tilde T$ to $T_2$ and is state-adding (or equivalently, the inverse transformation from~$T_2$ to $\tilde T$ is state-deleting). The confluent version arises when $\lambda_1=\lambda_2$, i.e., we use seed functions at each of the two steps which are (at least formally) eigenfunctions of the corresponding (formal) operator with the same eigenvalue. A full discussion of confluent Darboux transformations from this point of view can be seen, for instance, in~\cite{baye}. We can make this notion more precise in the following definition.

\begin{defin} Let $T_1$, $T_2$ be second-order operators with rational coefficients.
 We will say that~$T_1$ and~$T_2$ are related by a \emph{rational confluent Darboux transformation} if there
 exist first-order operators $A_1$, $A_2$, $B_1$, $B_2$, all with rational
 coefficients, and a constant
 $\lambda$ such that
 \begin{gather*} A_1 B_1 = A_2 B_2, \qquad
 T_1= B_1 A_1 + \lambda,\qquad T_2 = B_2 A_2 + \lambda. \end{gather*}
\end{defin}

Given polynomials $\tau(z)$, $\phi(z)$, we define the rational operators
\begin{gather}
 A(\tau,\phi) := \tau^{-1} \left( \phi D_z - \phi_z\right),\nonumber\\
 B(\phi,\tau) := A(\phi,\tau)\circ \big(1-z^2\big)
 =\phi^{-1} \left( \big(1-z^2\big)\left( \tau D_z - \tau_z\right)-2z\tau\right). \label{eq:Bdef}
\end{gather}
The form of these operators coincides with the general form of the
first-order operators appearing in factorization of operators given in
\cite[Proposition~3.5]{GFGM19}, with a particular choice that ensures
that operator $\hat T(\tau)=B(\phi,\tau)A(\tau,\phi)$ is in the
natural gauge \cite[Definition 5.1]{GFGM19}. In the proofs, we will
use the fact that, for a given function $f$, we have
\[A(\tau,\phi)f=\tau^{-1}\Wr(\phi, f),\]
where $\Wr$ denotes the Wronskian determinant.

Throughout this section, we consider an exceptional Legendre operator
 $\hat T(\tau)$ with polynomials
$\{\pi_i(z)\}_{i\in \Nz}$ that satisfy
 \begin{equation} \label{eq:Tphilami}
 \hT(\tau)\pi_i = \lambda_i \pi_i,\qquad \lambda_{i_1}\neq
 \lambda_{i_2} \quad \text{if} \quad i_1\neq i_2,\quad i,i_1,i_2\in \Nz.
 \end{equation}
 Our goal is to apply a rational CDT on this operator. To this end,
 for $m\in\Nz$ and $t\in\Rset$, let us define the following objects:
\begin{gather}\label{eq:rhoij}
 \rho_{i_1i_2}(z)
 := \int_{-1}^z\frac{\pi_{i_1}(u)\pi_{i_2}(u)}{\tau(u)^2} {\rm d}u,\qquad
 i_1,i_2\in \Nz,\\
 \label{eq:taujdef}
 \tau_m(z;t) := \tau(z)\left( 1+t\rho_{mm}(z) \right),\\
\label{eq:piijdef}
 \pi_{m;i}(z;t)
 := (1+t\rho_{mm}(z))\pi_i(z)- t \rho_{im}(z)
 \pi_m(z),\qquad i\in \Nz.
\end{gather}
For a lighter notation, we may omit the $t$ dependence and write $\tau_m$ instead of $\tau_m (z;t)$. Note that $\tau$ might already
depend on a number of real parameters, so~$\tau_m$ will depend on the same parameters as~$\tau$, plus an extra parameter~$t$.

\begin{rmk}\label{rem:asum}In the rest of this Section, i.e., for the following four Propositions, we shall assume that $\rho_{i_1i_2}(z)$ defined by \eqref{eq:rhoij} is a rational function that vanishes at $z=-1$ and $\tau_m(z)$ and $\pi_{m;i}(z,t)$ defined by \eqref{eq:taujdef}--\eqref{eq:piijdef} are polynomials in $z$.

If we start from an exceptional Legendre operator \eqref{eq:hTdef} for a given $\tau$ polynomial with eigenpolynomials $\pi_i$, these assumptions are far from obvious by looking at \eqref{eq:rhoij}--\eqref{eq:piijdef}. In the next section we will see that the assumptions hold whenever \eqref{eq:Tphilami} does, i.e., that the rational CDT between exceptional Legendre families is well defined.
\end{rmk}

\begin{prop} \label{prop:RCDT}
 For $m\in \Nz$, let $\rho_{mm}(z)$ and $\tau_m(z)$ be defined by \eqref{eq:rhoij}--\eqref{eq:taujdef} and satisfy the assumptions of Remark~{\rm \ref{rem:asum}}. Then, $\hT(\tau)$ and $\hT(\tau_m)$ are related by a rational confluent Darboux transformation with
 \begin{gather*}
 A(\tau,\pi_m) B(\pi_m,\tau) = A(\tau_m,\pi_m) B(\pi_m,\tau_m),\\
 \hT(\tau) = B(\pi_m,\tau) A(\tau,\pi_m) + \lambda_m,\\
 \hT(\tau_m) = B(\pi_m,\tau_m) A(\tau_m,\pi_m) + \lambda_m.
 \end{gather*}
\end{prop}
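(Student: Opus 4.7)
The plan is to reduce all three identities in the statement to a single universal factorization formula plus one gauge-commutator calculation. The universal formula I would establish first is: for any polynomial $\tilde\tau$ and any $\pi$ satisfying $\hT(\tilde\tau)\pi=\lambda\pi$, one has
\[\hT(\tilde\tau) = B(\pi,\tilde\tau)\,A(\tilde\tau,\pi) + \lambda.\]
Using $A(\tilde\tau,\pi)f = \tilde\tau^{-1}\Wr(\pi,f)$, I would expand the composition $B(\pi,\tilde\tau)A(\tilde\tau,\pi)$ as a second-order differential operator acting on a test function $f$, then match coefficients of $D_z^2$, $D_z$ and $D_z^0$ against $\hT(\tilde\tau)-\lambda$. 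The $D_z^2$ and $D_z$ parts agree automatically from the definitions of $A$ and $B$, while the $D_z^0$ parts coincide precisely because of the eigenvalue equation for $\pi$. The second identity of the proposition is then the specialization $\tilde\tau=\tau$, $\pi=\pi_m$, $\lambda=\lambda_m$ combined with \eqref{eq:Tphilami}.

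To deduce the third identity from the same universal formula I need to check that $\hT(\tau_m)\pi_m=\lambda_m\pi_m$. Writing $\tau_m=\tau\sigma$ with $\sigma:=1+t\rho_{mm}$, so that $\sigma'=t\pi_m^2/\tau^2$, subtraction gives
\[\hT(\tau_m)\pi_m-\hT(\tau)\pi_m = \big(1-z^2\big)\biggl[-2\frac{\sigma'}{\sigma}\pi_m' + \biggl(\frac{2\tau'\sigma'}{\tau\sigma}+\frac{\sigma''}{\sigma}\biggr)\pi_m\biggr].\]
The bracket vanishes once one observes that $\sigma''\pi_m=2\sigma'\left(\pi_m'-\tau'\pi_m/\tau\right)$, which is an immediate consequence of $\sigma'=t\pi_m^2/\tau^2$. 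Applying the universal formula with $\tilde\tau=\tau_m$ then yields the third identity.

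The first identity is the genuinely new content and requires direct computation. Two preparatory gauge identities simplify matters considerably:
\[A(\tau_m,\pi_m)=\sigma^{-1}A(\tau,\pi_m),\qquad B(\pi_m,\tau_m)=\sigma B(\pi_m,\tau)-\big(1-z^2\big)\frac{t\pi_m}{\tau},\]
where the second uses $\tau_m'=\tau'\sigma+t\pi_m^2/\tau$. Forming the product $A(\tau_m,\pi_m)B(\pi_m,\tau_m)$ and commuting $\sigma$ past $A(\tau,\pi_m)$ via the identity $[A(\tau,\pi_m),\sigma]=t\pi_m^3/\tau^3$ (understood as an operator of multiplication) reduces the entire claim to the single identity
\[\frac{t\pi_m^3}{\tau^3}\,B(\pi_m,\tau) = A(\tau,\pi_m)\circ\biggl[\big(1-z^2\big)\frac{t\pi_m}{\tau}\biggr].\]
Both sides share the $D_z$ coefficient $t\big(1-z^2\big)\pi_m^2/\tau^2$, and their $D_z^0$ parts agree through the Wronskian computation $\Wr(\pi_m,(1-z^2)\pi_m/\tau)=-\pi_m^2[(1-z^2)\tau'+2z\tau]/\tau^2$.

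The main obstacle I anticipate is the bookkeeping in the first identity: one must keep several gauge factors straight and arrange for the commutator term arising from $A(\tau,\pi_m)\,\sigma$ to cancel exactly against the non-multiplicative correction $-(1-z^2)t\pi_m/\tau$ in $B(\pi_m,\tau_m)$. Once this cancellation is organized, the remaining verification is an elementary Wronskian check. The assumptions of Remark~\ref{rem:asum} are needed only to ensure that all these manipulations stay within the ring of operators with rational coefficients.
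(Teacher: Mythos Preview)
Your proposal is correct and amounts to a carefully organized instance of exactly what the paper does: its proof of this proposition consists of the single sentence ``The results follow from direct calculation with the previous definitions.'' Your universal factorization lemma, the verification that $\pi_m$ remains an eigenfunction of $\hT(\tau_m)$, and the gauge/commutator bookkeeping for the first identity all check out and together constitute one clean way to carry out that direct calculation.
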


\begin{proof}The results follow from direct calculation with the previous definitions.
\end{proof}

The following lemma examines the behaviour at the endpoint $z=-1$ of a combination of these objects, and it will be necessary to prove some of the following propositions.

\begin{lem}\label{lem:1}
Let $\{\pi_i\}_{i\in\Nz}$ be polynomials that satisfy the eigenvalue equation \eqref{eq:Tphilami} with \eqref{eq:hTdef} and let $\rho_{i_1,i_2}(z)$ be the rational functions defined by~\eqref{eq:rhoij} that vanish at $z=-1$. Then,
\[\left. \big(1-z^2\big) \frac{\Wr(\pi_i,\pi_m)}{\tau^2}\right|_{z=-1} =0. \]
\end{lem}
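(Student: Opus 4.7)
My plan is to first derive a Lagrange-type identity for the operator $\hT(\tau)$. A direct computation shows that
\[\hT(\tau) f = \tau^2 \left(\frac{1-z^2}{\tau^2} f'\right)' + (1-z^2)\frac{\tau''}{\tau} f,\]
exhibiting $\hT(\tau)$ as formally self-adjoint with respect to the weight $1/\tau^2$. Forming $\pi_m \hT(\tau)\pi_i - \pi_i\hT(\tau)\pi_m$, the zeroth-order $(1-z^2)\tau''/\tau$ term cancels, and the standard Lagrange manipulation for formally self-adjoint second-order operators yields
\[\pi_m\hT(\tau)\pi_i - \pi_i\hT(\tau)\pi_m = -\tau^2\left[\frac{(1-z^2)\Wr(\pi_i,\pi_m)}{\tau^2}\right]'.\]

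Substituting the eigenvalue equation \eqref{eq:Tphilami} on the left, dividing by $\tau^2$, and then taking antiderivatives as rational functions (using $\rho_{im}'=\pi_i\pi_m/\tau^2$ from \eqref{eq:rhoij}), I obtain
\[(\lambda_i-\lambda_m)\rho_{im}(z) = K - \frac{(1-z^2)\Wr(\pi_i,\pi_m)(z)}{\tau(z)^2}\]
for some constant $K$. Evaluating at $z=-1$ using the hypothesis $\rho_{im}(-1)=0$, I see that $K$ is precisely the boundary value the lemma asserts to be zero, so the problem reduces to showing $K=0$.

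For $i=m$ the Wronskian vanishes identically, so assume $i\neq m$, whence $\lambda_i\neq\lambda_m$ by \eqref{eq:Tphilami}. If $\tau(-1)\neq 0$, then $\Wr(\pi_i,\pi_m)/\tau^2$ is regular at $-1$ and the $(1-z^2)$ factor forces the limit to vanish. The delicate case is when $\tau$ has a zero of order $k\geq 1$ at $z=-1$; here I would invoke the assumption that $\rho_{ii}$ is rational and vanishes at $-1$, which rules out any pole of $\pi_i^2/\tau^2$ at $-1$ (a simple pole would create a logarithmic antiderivative, and a higher-order pole would diverge at the endpoint). Hence $\pi_i$ vanishes at $-1$ to order at least $k$, and a short logarithmic-derivative calculation then shows that $\Wr(\pi_i,\pi_m)$ vanishes at $-1$ to order at least $2k$, which gives $K=0$. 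This endpoint analysis in the degenerate case is the main obstacle, though it is not strictly required for the applications in the paper: the inductive construction guarantees $\tau(-1)\neq 0$ starting from the base case $\tau\equiv 1$, since $\rho_{mm}(-1)=0$ makes $\tau_m(-1)=\tau(-1)$ at every step.
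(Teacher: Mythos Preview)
Your proof is correct and shares the paper's overall structure: both reduce to showing that $\Wr(\pi_i,\pi_m)/\tau^2$ is regular at $z=-1$, so that the factor $(1-z^2)$ annihilates it. The paper reaches this by evaluating the (unintegrated) differential identity directly at $z=-1$, obtaining $\Wr(\pi_i,\pi_m)/\tau^2\big|_{z=-1}$ as a constant multiple of $\pi_i\pi_m/\tau^2\big|_{z=-1}=\rho_{im}'(-1)$, which is finite because $\rho_{im}$ is rational and regular there. Your endpoint analysis in the degenerate case $\tau(-1)=0$ takes a slightly different route: you use the diagonal hypothesis $\rho_{ii}$ (and, by the same argument, $\rho_{mm}$) to force $\pi_i$ and $\pi_m$ each to vanish to order at least $k$, and then bound the order of the Wronskian directly. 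One small omission: you only state the conclusion for $\pi_i$, but the bound on $\Wr(\pi_i,\pi_m)$ of order $\ge 2k$ requires the same for $\pi_m$, which follows identically from $\rho_{mm}$. As a bonus, your integrated identity with $K=0$ is exactly equation~\eqref{eq:rhoim} of Proposition~\ref{prop:piij}, so you have in effect merged the lemma with the first step of that proposition.
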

\begin{proof}
 Since $\rho_{im}(z)$ is rational and vanishes at $z=-1$, we can write for given $\alpha,\beta\in\Nz$
 \begin{gather*} \rho_{im}(z)=(1+z)^{1+\alpha}q(z), \qquad q(-1)=C_q\neq 0,\qquad \alpha>0,\\
 \tau(z)=(1+z)^{\beta}p(z),\qquad p(-1)=C_p\neq 0,
 \end{gather*}
 where $q$ is a rational function and $p$ is a polynomial.
 Thus
 \[ \rho_{im}'(-1) = \left.\frac{\pi_i\pi_m}{\tau^2}\right|_{z=-1}=\begin{cases} C_q & \text{if } \alpha=1,\\
 0 & \text{if } \alpha\geq 2. \end{cases} \]
 Using the eigenvalue equation we have
 \[ \big(1-z^2\big)(\pi_i''\pi_m-\pi_i\pi_m'')-2\left( \big(1-z^2\big)\frac{\tau'}{\tau}+z\right)(\pi_i'\pi_m-\pi_i\pi_m')=(\lambda_i-\lambda_m)\pi_i\pi_m. \]
 Evaluating both sides at $z=-1$, we have
 \[ \left.\frac{\Wr(\pi_i,\pi_m)}{\tau^2}\right|_{z=-1} = \left.\frac{(\lambda_i-\lambda_m)}{2(1-2\beta)} \frac{\pi_i\pi_m}{\tau^2}\right|_{z=-1}<\infty. \]
 Since the previous expression is bounded, the desired result is proved.
\end{proof}

The next proposition shows how to build the eigenpolynomials of the transformed operator, by the use of the second order intertwining relations for CDTs
\[ (B_2A_1)T_1 = T_2(B_2A_1),\qquad T_1(B_1A_2)=(B_1A_2)T_2. \]

\begin{prop}\label{prop:piij} For $m\in \Nz$, let $\rho_{im}(z)$, $\tau_m(z)$ and $\pi_{m;i}(z,t)$ be defined by \eqref{eq:rhoij}--\eqref{eq:piijdef} and satisfy the assumptions of Remark~{\rm \ref{rem:asum}}.

Then, for $i\in \Nz$, we have
 \begin{gather}
 (\lambda_{m}-\lambda_{i})\rho_{im}
 = \big(1-z^2\big)\tau^{-1} A(\tau,\pi_{m})\pi_{i},\label{eq:rhoim}\\
 (\lambda_{m}-\lambda_{i}) \pi_{m;i} =B(\pi_m,\tau_m) A(\tau,\pi_m)\pi_i,\label{eq:piim}\\
 \hT(\tau_m) \pi_{m;i} = \lambda_i \pi_{m;i}.\label{eq:hatTtaum}
 \end{gather}
\end{prop}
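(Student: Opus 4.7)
\textbf{Proof of \eqref{eq:rhoim}.} I would start from the two eigenvalue equations $\hT(\tau)\pi_i = \lambda_i \pi_i$ and $\hT(\tau)\pi_m = \lambda_m \pi_m$. Multiplying the first by $\pi_m$ and the second by $\pi_i$ and subtracting, the $\tau_{zz}/\tau$ terms cancel and one obtains, writing $W = \Wr(\pi_m,\pi_i) = \pi_m\pi_i' - \pi_m'\pi_i$,
\[
(\lambda_m - \lambda_i)\pi_m\pi_i = (1-z^2)W' - 2\bigl[(1-z^2)\tau_z/\tau + z\bigr]W .
\]
Dividing by $\tau^2$, the right-hand side is exactly $\tfrac{d}{dz}\bigl[(1-z^2)W/\tau^2\bigr]$. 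Integrating from $-1$ to $z$ and invoking Lemma~\ref{lem:1} to kill the boundary term at $z=-1$ gives \eqref{eq:rhoim} after identifying $\tau^{-1}W = A(\tau,\pi_m)\pi_i$.

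\textbf{Proof of \eqref{eq:piim}.} I would use the factorization $B(\pi_m,\tau_m) = A(\pi_m,\tau_m)\circ(1-z^2)$ from \eqref{eq:Bdef}, together with the rearrangement $(1-z^2)A(\tau,\pi_m)\pi_i = (\lambda_m-\lambda_i)\,\tau\rho_{im}$ of \eqref{eq:rhoim}, to obtain $B(\pi_m,\tau_m)A(\tau,\pi_m)\pi_i = (\lambda_m-\lambda_i)\,A(\pi_m,\tau_m)[\tau\rho_{im}]$. It then suffices to verify the pointwise identity $A(\pi_m,\tau_m)[\tau\rho_{im}] = \pi_{m;i}$. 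Expanding via $(\tau\rho_{im})' = \tau'\rho_{im} + \pi_i\pi_m/\tau$ (from $\rho_{im}' = \pi_i\pi_m/\tau^2$), the algebraic heart of the computation is the cancellation $\tau_m\tau' - \tau_m'\tau = -t\pi_m^2$, which is a direct consequence of $\tau_m = \tau(1+t\rho_{mm})$ and $\rho_{mm}' = \pi_m^2/\tau^2$. With this in hand, the expression collapses cleanly to $(1+t\rho_{mm})\pi_i - t\rho_{im}\pi_m = \pi_{m;i}$. I expect this specific Wronskian-type cancellation between $\tau$ and $\tau_m$ to be the main technical hurdle; everything else is mechanical.

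\textbf{Proof of \eqref{eq:hatTtaum}.} This follows from the intertwining property built into Proposition~\ref{prop:RCDT}. From the factorizations $\hT(\tau) - \lambda_m = B(\pi_m,\tau)A(\tau,\pi_m)$, $\hT(\tau_m) - \lambda_m = B(\pi_m,\tau_m)A(\tau_m,\pi_m)$, combined with the confluent identity $A(\tau,\pi_m)B(\pi_m,\tau) = A(\tau_m,\pi_m)B(\pi_m,\tau_m)$, a short double-commutator manipulation yields
\[
\hT(\tau_m)\,\bigl[B(\pi_m,\tau_m)A(\tau,\pi_m)\bigr] = \bigl[B(\pi_m,\tau_m)A(\tau,\pi_m)\bigr]\,\hT(\tau).
\]
Applying both sides to $\pi_i$ and invoking \eqref{eq:piim} gives $(\lambda_m-\lambda_i)\hT(\tau_m)\pi_{m;i} = \lambda_i(\lambda_m-\lambda_i)\pi_{m;i}$; cancelling the nonzero factor for $i\neq m$ yields \eqref{eq:hatTtaum}. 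The degenerate case $i=m$ must be checked separately: the definition \eqref{eq:piijdef} directly gives $\pi_{m;m} = \pi_m$, and since $A(\tau_m,\pi_m)\pi_m = \tau_m^{-1}\Wr(\pi_m,\pi_m) = 0$, the second factorization immediately produces $\hT(\tau_m)\pi_m = \lambda_m\pi_m$.
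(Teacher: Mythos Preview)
Your proposal is correct and follows essentially the same route as the paper: for \eqref{eq:rhoim} both you and the paper derive the Lagrange/Sturm--Liouville identity for the Wronskian, recognize it as $\tfrac{d}{dz}\bigl[(1-z^2)\tau^{-2}\Wr(\pi_m,\pi_i)\bigr]$, and integrate using Lemma~\ref{lem:1}; for \eqref{eq:piim} both use $B(\pi_m,\tau_m)=A(\pi_m,\tau_m)\circ(1-z^2)$ together with \eqref{eq:rhoim} and then expand the resulting Wronskian (the paper factors out $\tau^2$ to write $\pi_m^{-1}\tau^2\Wr(1+t\rho_{mm},\rho_{im})$, which is exactly your cancellation $\tau_m\tau'-\tau_m'\tau=-t\pi_m^2$ in disguise); for \eqref{eq:hatTtaum} both invoke the intertwining relation from Proposition~\ref{prop:RCDT}. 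Your explicit treatment of the degenerate case $i=m$ via $\pi_{m;m}=\pi_m$ and $A(\tau_m,\pi_m)\pi_m=0$ is a point the paper does not spell out, so in that respect your argument is slightly more complete.
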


\begin{proof}We start by noticing that
\begin{align*}
(\lambda_m-\lambda_i)\rho_{im}'=(\lambda_m-\lambda_i)\frac{\pi_i\pi_m}{\tau^2}=\big(\big(1-z^2\big)\tau^{-1} A(\tau,\pi_{m})\pi_{i}\big)',
\end{align*}
where for the last equality we use the eigenvalue equation \eqref{eq:Tphilami}, or equivalently the Sturm--Liouville equation
\begin{gather*}
\left(\big(1-z^2\big)\tau^{-2}\pi_i'\right)'+\big(1-z^2\big)\tau_{zz}\tau^{-3}\pi_i=\lambda_i\tau^{-2}\pi_i.
\end{gather*}

The first result follows by integration since Lemma~\ref{lem:1} ensures that
\begin{gather*}
\big(1-z^2\big)\tau^{-1} A(\tau,\pi_{m})\pi_{i}\big|_{z=-1}=\big(1-z^2\big)\frac{\Wr(\pi_m,\pi_i)}{\tau^2}\bigg|_{z=-1}=0.
\end{gather*}

The second identity follows by direct calculation using the definitions and previous identities. Indeed, using \eqref{eq:Bdef} and \eqref{eq:rhoim} we have
\begin{align*}
B(\pi_m,\tau_m) A(\tau,\pi_m)\pi_i
&=A(\pi_m,\tau_m)\circ\big(1-z^2\big)\left(\frac{(\lambda_{m}-\lambda_{i})\rho_{im}}{\big(1-z^2\big)\tau^{-1}}\right)
\\
&=(\lambda_{m}-\lambda_{i})\pi_m^{-1}\Wr(\tau_m,\tau\rho_{im})\\
&=(\lambda_{m}-\lambda_{i})\pi_m^{-1}\tau^2\Wr(1+t\rho_{mm},\rho_{im})
\end{align*}
and deriving $\rho_{im}$ in \eqref{eq:rhoij} and using \eqref{eq:piijdef} leads to the desired result~\eqref{eq:piim}.

The third identity \eqref{eq:hatTtaum} follows trivially from \eqref{eq:piim} and the intertwining relation
\begin{gather*}
 \hT(\tau_m) B(\pi_m,\tau_m) A(\tau,\pi_m)=B(\pi_m,\tau_m) A(\tau,\pi_m)\hT(\tau).\tag*{\qed}
\end{gather*}\renewcommand{\qed}{}
\end{proof}

The next result derives the transformation rule for $\rho_{i_1i_2}$ under a CDT, and it is key to obtain the norming constants of the transformed polynomials in Proposition~\ref{prop:recortho}.

\begin{prop} \label{prop:rho2step}
 For $m,i_1,i_2\in \Nz$, let $\rho_{i_1i_2}(z)$, $\tau_m(z)$ and $\pi_{m;i}(z,t)$ be defined by \eqref{eq:rhoij}--\eqref{eq:piijdef} and satisfy the assumptions of Remark~{\rm \ref{rem:asum}}.
 Then,
 \begin{gather} \label{eq:rho2step}
 \int_{-1}^z \frac{\pi_{m;i_1}(u;t) \pi_{m;i_2}(u;t)}{\tau_m(u)^2}{\rm d}u =
 \rho_{i_1i_2}(z) - \frac{t \rho_{i_1m}(z) \rho_{i_2m}(z)}{1+t \rho_{mm}(z)},\qquad i_1,i_2\in \Nz.
 \end{gather}
\end{prop}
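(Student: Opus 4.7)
The plan is to verify the identity \eqref{eq:rho2step} by showing that both sides vanish at $z=-1$ and that their derivatives with respect to $z$ agree. Since the claim is an equality of antiderivatives, this is sufficient.

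First I would observe that both sides vanish at $z=-1$. The left-hand side is an integral from $-1$ to $-1$. For the right-hand side, the assumption of Remark~\ref{rem:asum} (with \eqref{eq:rhoij} integrated from $-1$) gives $\rho_{i_1 i_2}(-1) = \rho_{i_1 m}(-1) = \rho_{i_2 m}(-1) = \rho_{mm}(-1) = 0$, so the whole expression vanishes at $z=-1$.

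Next I would differentiate the right-hand side. Using $\rho_{ab}'(z) = \pi_a(z)\pi_b(z)/\tau(z)^2$, the quotient-rule computation gives, after multiplying numerator and denominator by $\tau(z)^2$,
\begin{gather*}
\frac{d}{dz}\left[\rho_{i_1i_2} - \frac{t\rho_{i_1m}\rho_{i_2m}}{1+t\rho_{mm}}\right]
= \frac{(1+t\rho_{mm})^2 \pi_{i_1}\pi_{i_2} - t(1+t\rho_{mm})\pi_m(\pi_{i_1}\rho_{i_2m}+\pi_{i_2}\rho_{i_1m}) + t^2 \pi_m^2 \rho_{i_1m}\rho_{i_2m}}{\tau^2 (1+t\rho_{mm})^2}.
\end{gather*}
The key algebraic observation is that the numerator factors as
\[
\bigl[(1+t\rho_{mm})\pi_{i_1} - t\rho_{i_1m}\pi_m\bigr]\bigl[(1+t\rho_{mm})\pi_{i_2} - t\rho_{i_2m}\pi_m\bigr],
\]
which by \eqref{eq:piijdef} is precisely $\pi_{m;i_1}(z;t)\,\pi_{m;i_2}(z;t)$. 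Since the denominator $\tau^2(1+t\rho_{mm})^2 = \tau_m^2$ by \eqref{eq:taujdef}, the derivative of the right-hand side equals $\pi_{m;i_1}(z;t)\pi_{m;i_2}(z;t)/\tau_m(z)^2$, which is exactly the integrand (and hence the derivative) of the left-hand side.

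There is no genuine obstacle here: the proof is a short computation. The only point requiring a bit of care is recognizing the perfect-product factorization of the numerator after differentiation, which is what makes the identity work and reflects the structural fact that \eqref{eq:piijdef} and \eqref{eq:taujdef} were designed precisely so that the rational-CDT reproduces the integrated bilinear form up to a rank-one correction in $\rho$.
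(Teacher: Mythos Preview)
Your proof is correct and follows exactly the same approach as the paper, which also checks that both sides vanish at $z=-1$ and that their $z$-derivatives coincide by direct computation. In fact your write-up is more explicit than the paper's, which merely states that the derivative identity ``can be easily proved by direct computation using the definitions'' without exhibiting the factorization you spell out.
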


\begin{proof}\looseness=1 The identity between the derivatives of both sides can be easily proved by direct computa\-tion using the definitions. The desired result follows then by integration since \linebreak \mbox{$\rho_{i_1i_2}(-1)=0$}.
\end{proof}

The last proposition of this section shows that the CDT of an exceptional Legendre family falls into the same class under suitable bound on the introduced parameter~$t$.
\begin{prop} \label{prop:recortho}
Assume that $\tau$ does not vanish in $[-1,1]$ and that $\{\pi_i\}_{i\in\Nz}$ are exceptional Legendre polynomials with
 \[
 \int_{-1}^1 \frac{\pi_{i_1}(u) \pi_{i_2}(u)}{\tau(u)^2} {\rm d}u = \nu_{i_1} \delta_{i_1i_2} ,
\]
 for constants $\nu_i>0$, $i\in \Nz$ and completeness in
 $\rL^2\big([-1,1], \tau^{-2}{\rm d}z\big)$. Let
 $m\in \Nz$ and set
\[
 \nu_{m;i} :=
 \begin{cases}
 \nu_i & \text{if } i\neq m,\\
 (t+ \nu_m^{-1})^{-1} & \text{if } i=m.
 \end{cases}
\]
 Then, $\tau_m(z)>0$ on $[-1,1]$ if and only if $\nu_{m;m}>0$. In that case, the set $\{\pi_{m;i}(z;t)\}_{i\in\Nz}$ is a~family of exceptional Legendre polynomials with
 \begin{equation} \label{eq:pinuij}
 \int_{-1}^1 \frac{\pi_{m;i_1}(u) \pi_{m;i_2}(u)}{\tau_m(u)^2} {\rm d}u =
 \nu_{m;i_1} \delta_{i_1i_2} ,\qquad i_1,i_2,m\in \Nz,
 \end{equation}
 and completeness in
 $\rL^2\big([-1,1], \tau_m^{-2}{\rm d}z\big)$.
\end{prop}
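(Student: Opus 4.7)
The plan is to proceed in three stages: control the sign of $\tau_m$ on $[-1,1]$, verify the orthogonality relation by specializing Proposition~\ref{prop:rho2step} to the endpoint $z=1$, and transfer the $\rL^2$-completeness from $\{\pi_i\}$ to $\{\pi_{m;i}\}$ via the intertwining of Proposition~\ref{prop:piij}.

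For the sign of $\tau_m$, I would observe that $\rho_{mm}'(z)=\pi_m(z)^2/\tau(z)^2\geq 0$ on $[-1,1]$, so $\rho_{mm}$ is nondecreasing from $\rho_{mm}(-1)=0$ to $\rho_{mm}(1)=\nu_m>0$. Hence $1+t\rho_{mm}$ is monotone on $[-1,1]$ and its minimum there equals $\min(1,1+t\nu_m)$. Since $\tau>0$ on $[-1,1]$ by hypothesis and $\tau_m=\tau(1+t\rho_{mm})$, one has $\tau_m>0$ on $[-1,1]$ if and only if $1+t\nu_m>0$, which is precisely the condition $\nu_{m;m}=\big(t+\nu_m^{-1}\big)^{-1}>0$.

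For orthogonality, assuming $\nu_{m;m}>0$ I would set $z=1$ in~\eqref{eq:rho2step} and plug in $\rho_{i_1i_2}(1)=\nu_{i_1}\delta_{i_1i_2}$ and $\rho_{im}(1)=\nu_m\delta_{im}$. The three cases $i_1\neq i_2$, $i_1=i_2\neq m$, and $i_1=i_2=m$ yield respectively $0$, $\nu_{i_1}$, and $\nu_m-t\nu_m^2/(1+t\nu_m)=\nu_m/(1+t\nu_m)=\nu_{m;m}$; in each case the result equals $\nu_{m;i_1}\delta_{i_1i_2}$, establishing~\eqref{eq:pinuij}. Positivity of every $\nu_{m;i}$ then follows from $\nu_i>0$ together with $\nu_{m;m}>0$.

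The hard part will be the $\rL^2$-completeness. By Proposition~\ref{prop:piij}, the composite $B(\pi_m,\tau_m)A(\tau,\pi_m)$ carries $\pi_i$ to $(\lambda_m-\lambda_i)\pi_{m;i}$, giving a bijection between the spans of $\{\pi_i\}_{i\neq m}$ and $\{\pi_{m;i}\}_{i\neq m}$ and intertwining $\hT(\tau)$ with $\hT(\tau_m)$; the extra eigenfunction $\pi_{m;m}$ is the bound state ``re-added'' at level $\lambda_m$. I would conclude completeness either (a) by a duality argument: given $f\in\rL^2\big([-1,1],\tau_m^{-2}{\rm d}z\big)$ orthogonal to every $\pi_{m;i}$, subtract off its component along $\pi_{m;m}$ (which vanishes by Step~2), then use the formal adjoint of the intertwiner to pull $f$ back to a function in $\rL^2\big([-1,1],\tau^{-2}{\rm d}z\big)$ orthogonal to every $\pi_i$, hence zero by the hypothesized completeness; or (b) by invoking Sturm--Liouville theory directly: since $\tau_m$ has no zeros on $[-1,1]$, $\hT(\tau_m)$ defines a self-adjoint operator on $\rL^2\big([-1,1],\tau_m^{-2}{\rm d}z\big)$ with compact resolvent and discrete spectrum, and the intertwining combined with the explicit extra bound state identifies that spectrum with $\{\lambda_i\}_{i\in\Nz}$ and the eigenfunctions with $\{\pi_{m;i}\}_{i\in\Nz}$, at which point completeness is automatic. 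The main obstacle lies in ruling out spurious non-polynomial $\rL^2$-eigenfunctions of $\hT(\tau_m)$ and in controlling the endpoint behaviour at $z=\pm 1$ where the leading coefficient $1-z^2$ vanishes; both reduce to the regularity of $\tau_m$ on $[-1,1]$ from Step~1 together with the endpoint analysis already set up by Lemma~\ref{lem:1}.
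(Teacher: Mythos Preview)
Your first two steps coincide with the paper's argument: the paper also uses the monotonicity of $\rho_{mm}$ to characterize positivity of $\tau_m$, and also evaluates~\eqref{eq:rho2step} at $z=1$ to read off the norms $\nu_{m;i}$.

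The completeness argument is where you diverge. The paper does \emph{not} go through the differential intertwiner or through Sturm--Liouville spectral theory. Instead it uses a direct computation with the resolution of the identity, adapted from inverse scattering~\cite{AM80}: it rewrites the completeness hypothesis as the distributional identity $\sum_i \nu_i^{-1}\,\pi_i(z)\pi_i(w)/(\tau(z)\tau(w))=\delta(z-w)$, exploits the purely algebraic relation $\pi_{m;i}/\tau_m=\pi_i/\tau - t\,\rho_{im}\,\pi_m/\tau_m$ from~\eqref{eq:taujdef}--\eqref{eq:piijdef}, and checks by an explicit calculation (summing against the old kernel and its moments $\rho_{ij}$) that $\sum_i \nu_{m;i}^{-1}\,\pi_{m;i}(z)\pi_{m;i}(w)/(\tau_m(z)\tau_m(w))=\delta(z-w)$. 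No unbounded operators appear; only bounded multiplication by $\rho_{im}$ and Heaviside functions.

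Your route~(a) is plausible in spirit but, as written, has a gap: $B(\pi_m,\tau_m)A(\tau,\pi_m)$ is a second-order differential operator, hence unbounded between the two weighted $\rL^2$ spaces, and a generic $f\in\rL^2\big([-1,1],\tau_m^{-2}{\rm d}z\big)$ is not in its domain (nor in that of the formal adjoint). You would need either a density/regularization argument or to replace the differential intertwiner by the bounded integral relation $\pi_{m;i}/\tau_m=\pi_i/\tau - t\,\rho_{im}\,\pi_m/\tau_m$, at which point you are essentially reproducing the paper's calculation. Your route~(b) also needs more: the endpoints $z=\pm1$ are singular for $\hT(\tau_m)$ (the leading coefficient $1-z^2$ vanishes), so self-adjointness requires a choice of boundary conditions, and you must then argue that the polynomial eigenfunctions $\pi_{m;i}$ exhaust the spectrum for that particular self-adjoint realization --- exactly the ``spurious eigenfunction'' issue you flag but do not resolve. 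The paper's kernel computation sidesteps both difficulties.
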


\begin{proof} First, note that \eqref{eq:pinuij} is
 true in a formal sense. By~\eqref{eq:rho2step}, the rational
 function
 \[ \rho_{m;i_1i_2}(z;t):= \rho_{i_1i_2}(z) - \frac{t \rho_{i_1m}(z)
 \rho_{i_2m}(z)}{1+t \rho_{mm}(z)} \] is defined by the integral
 on the l.h.s.\ of~\eqref{eq:rho2step}. Furthermore, since we are
 assuming that
 \[ \rho_{i_1i_2}(1) = \delta_{i_1i_2} \nu_{i_1}, \]
 we have
 \[ \rho_{m;i_1i_2}(1;t) = \delta_{i_1i_2} \left( \nu_{i_1} -
 \delta_{i_1 m} \frac{ t \nu_m^2}{1+t\nu_m} \right) = \delta_{i_1i_2}
 \nu_{m;i_1}.\]

 By \eqref{eq:taujdef}, $\tau_m(z)$ is positive on $z\in[-1,1]$ if
 and only if the same is true for $1+t \rho_{mm}(z)$. Since
 $\rho_{mm}(z)$ is an increasing function, the latter is true if and
 only if $1+t\nu_m>0$. Observe that
\[
 \nu_{m;m}^{-1} = t + \nu_m^{-1} = \nu_m^{-1}(1+t\nu_m).
\]
 Hence $\tau_m(z)$ is positive on $z\in [-1,1]$ if and only if $\nu_{m;m}>0$.

 Finally, we prove completeness. We assume that the eigenpolynomials
 $\{\pi_i(z)\}_{i\in \Nz}$ are $\rL^2$-complete in $[-1,1]$ relative to
 $\tau(z)^{-2}{\rm d}z$. Following an argument adapted from the
 appendix of~\cite{AM80}, we re-express the completeness assumption as
\[
 \sum_{i\in \Nz}\nu_i^{-1}
 \frac{\pi_i(z)}{\tau(z)}\frac{\pi_i(w)}{\tau(w)} = \delta(z-w),
\]
 where the equality is understood in distributional sense on
 $[-1,1]\times [-1,1]$. Rewriting~\eqref{eq:rhoij} as
\[
 \rho_{i_1i_2}(z) = \int_{-1}^1 \theta(z-u)
 \frac{\pi_{i_1}(u)\pi_{i_2}(u)}{\tau(u)^2} {\rm d}u,\qquad
 i_1,i_2\in \Nz ,
\]
where $\theta(z)$ denotes the Heaviside step function, it
follows that for $j\in\Nz$
 \begin{gather*}
 \sum_{i\in \Nz} \nu_i^{-1} \frac{\pi_i(z)}{\tau(z)} \rho_{ij}(w)
 = \theta(w-z) \frac{\pi_j(z)}{\tau(z)},\\
 \sum_{i\in \Nz} \nu_i^{-1} \rho_{ij}(z) \rho_{ij}(w)
 = \theta(w-z) \rho_{jj}(z) + \theta(z-w) \rho_{jj}(w).
 \end{gather*}
 By \eqref{eq:taujdef} and \eqref{eq:piijdef}, we have
 \begin{gather*}
 \frac{\pi_{m;i}(z)}{\tau_m(z)}
 = \frac{\pi_i(z)}{\tau(z)} -t
 \rho_{im}(z) \frac{\pi_m(z)}{\tau_m(z)},\qquad i\in \Nz ,\\
 \frac{t\rho_{mm}(z)}{\tau_m(z)}
 = \frac{1}{\tau(z)}- \frac{1}{\tau_m(z)}.
 \end{gather*}
 Therefore, making use of the previous identities,
 \begin{gather*}
 \sum_{i\in \Nz}\nu_{m;i}^{-1}
 \frac{\pi_{m;i}(z)}{\tau_m(z)}\frac{\pi_{m;i}(w)}{\tau_m(w)}
 == t
 \frac{\pi_{m;m}(z)}{\tau_m(z)}\frac{\pi_{m;m}(w)}{\tau_m(w)}
 + \sum_{i\in \Nz}\nu_{i}^{-1}
 \frac{\pi_{m;i}(z)}{\tau_m(z)}\frac{\pi_{m;i}(w)}{\tau_m(w)}\\
\qquad = t
 \frac{\pi_{m}(z)}{\tau_m(z)}\frac{\pi_{m}(w)}{\tau_m(w)}
 + \sum_{i\in \Nz}\nu_{i}^{-1}
 \left( \frac{\pi_i(z)}{\tau(z)} -t
 \rho_{im}(z) \frac{\pi_m(z)}{\tau_m(z)}\right)
 \left( \frac{\pi_i(w)}{\tau(w)} -t
 \rho_{im}(w) \frac{\pi_m(w)}{\tau_m(w)}\right) \\
 \qquad = t
 \frac{\pi_{m}(z)}{\tau_m(z)}\frac{\pi_{m}(w)}{\tau_m(w)}
 + \delta(z-w) - t\,\theta(w-z) \frac{\pi_m(z)}{\tau(z)}
 \frac{\pi_m(w)}{\tau_m(w)}
 - t\,\theta(z-w) \frac{\pi_m(z)}{\tau_m(z)}
 \frac{\pi_m(w)}{\tau(w)} \\
 \qquad \quad{}+ t^2 \big(\theta(w-z) \rho_{mm}(z) + \theta(z-w)
 \rho_{mm}(w)\big) \frac{\pi_m(z)}{\tau_m(z)}
 \frac{\pi_m(w)}{\tau_m(w)} = \delta(z-w).\tag*{\qed}
 \end{gather*}\renewcommand{\qed}{}
\end{proof}

\section{Recursive construction and proof of theorems}\label{sec:iteration}

The strategy to prove the main theorems is the following. First, we will define some polynomials and rational functions recursively, starting the recursion at the objects corresponding to the classical Legendre Sturm--Liouville problem. The recursion formulas coincide with~\eqref{eq:taudef},~\eqref{eq:piijdef} and~\eqref{eq:rho2step}. Next, we show in Proposition~\ref{prop:taurec} that these recursively defined objects coincide with those defined in Definition~\ref{def:objects}, and thus they satisfy the rationality and polynomiality conditions of Remark~\ref{rem:asum}. Propositions~\ref{prop:RCDT}--\ref{prop:recortho} then ensure that at each step of the recursion we have an exceptional Legendre Sturm--Liouville problem, provided the parameters are chosen in the right range.

\begin{defin}\label{def:recur} \allowdisplaybreaks Let $i,i_1,i_2\in\Nz$,
 $\bm=\bm_n=(m_1,\dots,m_n)\in\Nz^n$ and $\bm_{j}=(m_1,\dots, m_j)$,
 and define recursively functions $R_{\bm;i_1i_2}(z;\bt_{\bm})$,
 $\ttau_{\bm}(z;\bt_{\bm})$ and $\tilde P_{\bm;i}(z;\bt_{\bm})$. For
 $j=0$, we start the recursion at $\bm_0=\varnothing$ with
\begin{gather*}
 R_{\bm_0;i_1i_2}(z;\bt_{\bm_0})=R_{i_1i_2}(z),\\
 \ttau_{\bm_0}(z;\bt_{\bm_0})= 1,\\
 \tilde P_{\bm_0;i}(z;\bt_{\bm_0})=P_i(z),
\end{gather*}
where $R_{i_1i_2}(z)$ are given by \eqref{eq:Rijdef} and $P_i(z)$ are the classical Legendre operators. For $j=1,\dots,n$ we define recursively
 \begin{gather}
 R_{\bm_j;i_1i_2}(z;\bt_{\bm_{j}})=
 R_{\bm_{j-1};i_1i_2}(z;\bt_{\bm_{j-1}})\nonumber\\
 \hphantom{R_{\bm_j;i_1i_2}(z;\bt_{\bm_{j}})=}{} -
 \frac{t_{m_{j}} R_{\bm_{j-1};i_1m_{j}}(z;\bt_{\bm_{j-1}}) R_{\bm_{j-1};i_2m_j}(z;\bt_{\bm_{j-1}})}{
 1+ t_{m_j} R_{\bm_{j-1};m_j m_j}(z;\bt_{\bm_{j-1}})},\label{eq:Rndef}\\
 \label{eq:taurec}
 \ttau_{\bm_j}(z;\bt_{\bm_j})
 = \left( 1+t_{m_j} R_{\bm_{j-1};m_jm_j}(z;\bt_{\bm_{j-1}})\right) \ttau_{\bm_{j-1}}(z;\bt_{\bm_{j-1}}),
 \\
 \tilde{P}_{\bm_j;i}(z;\bt_{\bm_j}) = (1+t_{m_j} R_{\bm_{j-1};m_jm_j}(z;\bt_{\bm_{j-1}}))\tilde{P}_{\bm_{j-1};i}(z;\bt_{\bm_{j-1}}) \nonumber\\
\hphantom{ \tilde{P}_{\bm_j;i}(z;\bt_{\bm_j}) =}{}
 - t_{m_j}R_{\bm_{j-1};im_j}(z;\bt_{\bm_{j-1}}) \tilde{P}_{\bm_{j-1};m_j}(z;\bt_{\bm_{j-1}}). \label{eq:Prec}
 \end{gather}
\end{defin}

The next proposition states that these recursively defined functions coincide with the polynomials and rational functions introduced in Definition~\ref{def:objects}.

\begin{prop}\label{prop:taurec}
 For $i,i_1,i_2\in\Nz$ and $\bm=(m_1,\dots,m_n)\in\Nz^n$, let
 $R_{\bm;i_1i_2}(z;\bt_{\bm})$, $\ttau_{\bm}(z;\bt_{\bm})$ and
 $\tilde P_{\bm;i}(z;\bt_{\bm})$ be the functions defined in
 Definition~{\rm \ref{def:recur}}. Let $\tau_{\bm}(z;\bt_{\bm})$ and
 $ P_{\bm;i}(z;\bt_{\bm})$ be the polynomials defined in
 Definition~{\rm \ref{def:objects}}. Then,
\begin{gather} \label{eq:ttautau}\ttau_{\bm}(z;\bt_{\bm})=\tau_{\bm}(z;\bt_{\bm}),\\
\label{eq:tpp}\tilde P_{\bm;i}(z;\bt_{\bm})=P_{\bm;i}(z;\bt_{\bm}),
\end{gather}
and
\begin{equation} \label{eq:Rint}
 R_{\bm;i_1i_2}(z;\bt_\bm) = \int_{-1}^z
 \frac{P_{\bm;i_1}(u;\bt_\bm)P_{\bm;i_2}(u;\bt_\bm)}{\tau_\bm(u;\bt_\bm)^2}
 {\rm d}u,
 \end{equation}
 where, again, the integral denotes an anti-derivative that vanishes at $z=-1$.
\end{prop}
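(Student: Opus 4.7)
The plan is to establish the three identities \eqref{eq:ttautau}, \eqref{eq:tpp} and \eqref{eq:Rint} simultaneously by induction on $n=|\bm|$, carrying along the auxiliary eigenvalue identity $\hT(\tau_{\bm})P_{\bm;i}=-i(i+1)P_{\bm;i}$ at each step so that the hypotheses of Remark~\ref{rem:asum} are in force, which is what legitimises applying Propositions~\ref{prop:RCDT}--\ref{prop:rho2step}. The base case $n=0$ is immediate: the empty matrix has determinant $1$, the $1{\times}1$ inversion used in the definition of $\bQ_{(i)}$ yields $[\bQ_{(i)}]_1=P_i$, and $R_{i_1i_2}$ is by definition the anti-derivative appearing on the right-hand side of \eqref{eq:Rint}.

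The core of the argument is a purely algebraic matrix identity, which I would state and prove as a separate lemma (by induction on $|\bm|$):
\[
 R_{\bm;i_1i_2}(z;\bt_\bm) = R_{i_1i_2}(z) - \boldsymbol{r}_{\bm,i_1}^{\rm T}(z;\bt_\bm)\,\cR_\bm(z;\bt_\bm)^{-1}\,\boldsymbol{p}_{\bm,i_2}(z),
\]
where $\boldsymbol{r}_{\bm,i}^{\rm T}=(t_{m_l}R_{im_l})_{l=1}^{n}$ and $\boldsymbol{p}_{\bm,i}=(R_{m_ki})_{k=1}^{n}$. Regarding $\cR_{(\bm,m)}$ as a bordering of $\cR_\bm$ and inverting via the Woodbury/Schur complement formula, a short bookkeeping computation shows that the right-hand side of this identity satisfies exactly the rational recursion \eqref{eq:Rndef} with the correct base value, hence it equals $R_{\bm;i_1i_2}$.

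With this lemma in hand, the Schur complement expansion of $\det\cR_{\bm_n}$ along the last row and column, specialised at $i_1=i_2=m_n$, gives
\[
 \det\cR_{\bm_n}=\det\cR_{\bm_{n-1}}\cdot\big(1+t_{m_n}R_{\bm_{n-1};m_nm_n}\big),
\]
which is precisely the recursion \eqref{eq:taurec} defining $\ttau_{\bm_n}$; together with the inductive hypothesis this proves \eqref{eq:ttautau}. An analogous computation of the last row of $\cR_{(\bm_n,i)}^{-1}$ substituted into $[\bQ_{(\bm_n,i)}]_{n+1}$ reproduces the recursion \eqref{eq:Prec} and yields \eqref{eq:tpp}. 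Once \eqref{eq:ttautau} and \eqref{eq:tpp} are established, Proposition~\ref{prop:rho2step} applied to the confluent pair with seed $P_{\bm_{n-1};m_n}$ and parameter $t_{m_n}$ delivers \eqref{eq:Rint}, while \eqref{eq:hatTtaum} from Proposition~\ref{prop:piij} propagates the eigenvalue equation to $\bm_n$ and closes the induction.

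The main obstacle is the matrix identity lemma: conceptually it is just Woodbury, but organising the block inverse into the claimed telescoping recursive form requires careful bookkeeping with indices and signs. A secondary technical point is that polynomiality of $\ttau_{\bm_n}$ and $\tilde P_{\bm_n;i}$ is not manifest from the rational recursions \eqref{eq:Rndef}--\eqref{eq:Prec}, but it falls out automatically once they are identified with $\tau_{\bm_n}=\det\cR_{\bm_n}$ and $P_{\bm_n;i}=[\bQ_{(\bm_n,i)}]_{n+1}$, so no independent verification is needed.
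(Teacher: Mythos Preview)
Your proposal is correct and follows the same overall architecture as the paper: an induction on $n$ that reduces everything to a Schur-complement/block-inverse identity for $\cR_{\bm_n}$ in terms of $\cR_{\bm_{n-1}}$, and then an appeal to Proposition~\ref{prop:rho2step} for \eqref{eq:Rint}.

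The packaging of the key algebraic step is a little different, and your version is arguably cleaner. The paper's Lemma~\ref{prop:Rnrec} isolates the bottom-right $2\times2$ block of $\cR_\bm^{-1}$ via an iterated Sylvester identity and reads off the $\tau$- and $P$-recursions from that block. You instead state the closed-form scalar identity $R_{\bm;i_1i_2}=R_{i_1i_2}-\boldsymbol{r}_{\bm,i_1}^{\rm T}\cR_\bm^{-1}\boldsymbol{p}_{\bm,i_2}$, which is equivalent content (the Schur complement of $\cR_{\bm_{n-1}}$ in $\cR_{\bm_n}$ is precisely $1+t_{m_n}R_{\bm_{n-1};m_nm_n}$) but exposes all of $R_{\bm;i_1i_2}$ at once rather than only the diagonal and off-diagonal entries needed for the two recursions. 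This makes the $\det$-recursion for $\tau_\bm$ a one-liner and slightly streamlines the $P$-step, at the cost of the Woodbury bookkeeping you flag. The paper's route avoids writing down the full block inverse explicitly but pays for it with the more elaborate induction in the proof of \eqref{eq:tpp}. Your remark that polynomiality of $\ttau_\bm$ and $\tilde P_{\bm;i}$ comes for free from the determinantal side, and that carrying the eigenvalue relation along keeps Remark~\ref{rem:asum} satisfied, is exactly the logical role these observations play in the paper as well.
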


The consequence of this proposition is to ensure that the rational CDTs applied iteratively on the classical Legendre operator are always well defined, i.e., the conditions specified in Remark~\ref{rem:asum} will hold
at each step of the chain. Before we can address the proof of Proposition~\ref{prop:taurec}, we need to establish the following technical lemma.

\begin{lem}
 \label{prop:Rnrec}
 Let $\bm\in\Nz^n$ with $n\geq 3$. Then
 \begin{gather}
\begin{pmatrix}
 1+t_{m_{n-1}} R_{\tbm;m_{n-1}m_{n-1}}(z;\bt_\tbm) & t_{m_{n}} R_{\tbm;m_{n-1}m_n}(z;\bt_\tbm)\\
 t_{m_{n-1}} R_{\tbm;m_{n-1}m_n}(z;\bt_\tbm) & 1+ t_{m_n} R_{\tbm;m_nm_n;}(z;\bt_\tbm)
 \end{pmatrix}^{-1}\nonumber \\
 \qquad{} = \cM_{n-1,n}\left( \cR_{\bm}(z;t_\bm)^{-1} \right) , \label{eq:M12R}
 \end{gather}
 where $\cM_{n-1,n}$ denotes the bottom right $2\times 2$ submatrix of the
 indicated matrix and $\cR_{\bm}(z;t_\bm)$ is given in Definition~{\rm \ref{def:objects}}.
\end{lem}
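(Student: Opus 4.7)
The plan is to invoke the Schur complement formula for block matrix inversion and reduce the lemma to an auxiliary identity expressing the recursively defined functions $R_{\bm_j;ab}$ in terms of the entries of $\cR_{\bm_j}^{-1}$.

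Partition $\cR_\bm$ into blocks with $A = \cR_\tbm$ the top-left $(n-2)\times(n-2)$ block, $D$ the bottom-right $2\times 2$ block, and rectangular off-diagonal blocks $B$ of size $(n-2)\times 2$, $C$ of size $2\times(n-2)$, with entries $[B]_{k,j} = t_{m_{n-2+j}}R_{m_k m_{n-2+j}}$ and $[C]_{i,\ell} = t_{m_\ell}R_{m_{n-2+i}m_\ell}$. The standard Schur complement formula then gives $\cM_{n-1,n}(\cR_\bm^{-1}) = (D - CA^{-1}B)^{-1}$, so the lemma reduces to checking that
\[
[D - CA^{-1}B]_{ij} = \delta_{ij} + t_{m_{n-2+j}} R_{\tbm; m_{n-2+i} m_{n-2+j}}, \qquad i,j \in \{1,2\}.
\]

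The heart of the proof is the following auxiliary identity, to be established by induction on $j$: for every $\bm_j \in \Nz^j$ and every $a, b \in \Nz$,
\[
R_{\bm_j; ab}(z;\bt_{\bm_j}) = R_{ab}(z) - \sum_{k,\ell=1}^{j} t_{m_k} R_{a m_k}(z) \big[\cR_{\bm_j}(z;\bt_{\bm_j})^{-1}\big]_{k\ell} R_{m_\ell b}(z).
\]
The case $j=0$ is immediate, since the sum is empty and Definition~\ref{def:recur} sets $R_{\bm_0;ab} = R_{ab}$. For the induction step I would partition $\cR_{\bm_j}$ as a $2\times 2$ block matrix whose top-left corner is $\cR_{\bm_{j-1}}$ and whose last column $v$, last row $w^T$ and corner $d$ satisfy $v_k = t_{m_j}R_{m_km_j}$, $w_\ell = t_{m_\ell}R_{m_jm_\ell}$, $d = 1+t_{m_j}R_{m_jm_j}$. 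The inductive hypothesis applied at the three index pairs $(m_j,m_j)$, $(a,m_j)$ and $(b,m_j)$ simplifies the scalar Schur complement to $s = 1+t_{m_j}R_{\bm_{j-1};m_jm_j}$ and identifies $\sum_k t_{m_k}R_{am_k}[\cR_{\bm_{j-1}}^{-1}v]_k = t_{m_j}(R_{am_j}-R_{\bm_{j-1};am_j})$ together with the analogous expression for $\sum_\ell[w^T\cR_{\bm_{j-1}}^{-1}]_\ell R_{m_\ell b} = R_{m_j b}-R_{\bm_{j-1}; m_j b}$. Assembling the four case contributions from the block-inverse formula for $\cR_{\bm_j}^{-1}$, the cross terms factor cleanly as a product of those two quantities divided by $s$, yielding
\[
R_{ab} - R_{\bm_{j-1}; ab} + \frac{t_{m_j} R_{\bm_{j-1};a m_j}R_{\bm_{j-1};b m_j}}{1+t_{m_j} R_{\bm_{j-1}; m_j m_j}},
\]
which the recursion~\eqref{eq:Rndef} recognises as $R_{ab} - R_{\bm_j;ab}$.

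Specializing the auxiliary identity to $\bm_j = \tbm$ and $(a,b) = (m_{n-2+i}, m_{n-2+j})$, then multiplying by $t_{m_{n-2+j}}$ and adding $\delta_{ij}$, reproduces exactly the entries of $D - CA^{-1}B$, and taking inverses yields the lemma. The main obstacle is the bookkeeping in the inductive step: four terms arising from the block inverse of $\cR_{\bm_j}$ must combine with precisely the right signs to reproduce the single-step recursion, but the rank-one structure of the correction $vw^T$ in the block inverse makes the factorization go through once the induction hypothesis is invoked at the three companion index pairs.
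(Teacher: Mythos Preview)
Your argument is correct. Both your proof and the paper's rest on the same two ingredients---block-inverse (Schur/Sylvester) identities and the one-step recursion~\eqref{eq:Rndef}---but the organization differs. The paper keeps everything at the matrix level: it introduces the auxiliary $(n-j)\times(n-j)$ matrices $\cR^{[j]}_{\bm}$ with entries $\delta_{k\ell}+t_{m_{j+\ell}}R_{\bm_j;m_{j+k}m_{j+\ell}}$ and, peeling off one row and column at a time via Sylvester's identity, shows $\big(\cR^{[j+1]}_{\bm}\big)^{-1}=\cM_{2,n-j}\big((\cR^{[j]}_{\bm})^{-1}\big)$, so that after $n-2$ iterations the desired $2\times2$ block appears. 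You instead perform one Schur complement (splitting off the last two rows and columns at once) and shift the induction onto a scalar identity, your closed form
\[
R_{\bm_j;ab}=R_{ab}-\sum_{k,\ell=1}^{j}t_{m_k}R_{am_k}\big[\cR_{\bm_j}^{-1}\big]_{k\ell}R_{m_\ell b},
\]
which you then prove by a rank-one block-inverse update. This auxiliary identity is slightly stronger than what the paper isolates (it holds for arbitrary $a,b$, not just indices drawn from $\bm$), and it makes the final verification of the entries of $D-CA^{-1}B$ a one-line specialization. The paper's route, by contrast, avoids stating any separate scalar lemma and is marginally more economical for this particular statement. Either way the bookkeeping you flag as the main obstacle does collapse exactly as you describe: the four block-inverse contributions combine into $t_{m_j}R_{\bm_{j-1};am_j}R_{\bm_{j-1};bm_j}/s$ after the factorization $(\alpha-R_{am_j})(\beta-R_{m_jb})$.
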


\begin{proof} The result follows by iteration and the Sylvester determinant identity \cite{AAM96}. Let us start by showing the argument for $n=3$. If we write
\begin{gather*}
 \cR_\bm(z;\bt_\bm)=
 \left(\begin{array}{@{}c|ccc@{}}
 a &&\boldsymbol{b}_1&
 \\ \hline
 &&&\\
 \boldsymbol{b}_2^{\rm T} && \mathcal A&\\
 &&&
 \end{array}\right),
\end{gather*}
where $\boldsymbol{b}_j$ are 2-tuples for $j=1,2$ and $\mathcal A$ is the $\cM_{2,3}$ submatrix,
by the Sylvester determinant identity,
\begin{gather*}
 \mathcal M_{2,3}\left(\cR_\bm(z;\bt_\bm)^{-1}\right)= \left(\mathcal A-\boldsymbol{b}_2^{\rm T}a^{-1} \boldsymbol{b}_1\right)^{-1}.
\end{gather*}
Identifying the elements in the decomposition of $\cR_\bm(z;\bt_\bm)$ according to its definition, by direct calculation and the recursive formulation \eqref{eq:Rndef} we obtain{\samepage
\begin{gather*}
 \mathcal A-\boldsymbol{b}_1a^{-1} \boldsymbol{b}_2^{\rm T}
 =\begin{pmatrix}
 1+t_{m_{2}} R_{{m_1};m_{2}m_{2}}(z;t_{m_1}) & t_{m_{3}} R_{m_1;m_{2}m_3}(z;t_{m_1})\\
 t_{m_{2}} R_{m_1;m_{2}m_3}(z;t_{m_1}) & 1+ t_{m_3} R_{m_1;m_3m_3}(z;t_{m_1})
 \end{pmatrix}
\end{gather*}
as desired.}

For $n>3$, let $\cR^{[j]}_{\bm}(z;\bt_\bm)$ be the $(n-j)\times(n-j)$ matrix for any $j\in\{1,n-2\}$ with entries
\begin{gather}\label{eq:cRj}
 \big[\cR^{[j]}_{\bm}(z;\bt_\bm)\big]_{k\ell}:=\delta_{k\ell}+t_{m_{j+\ell}}R_{\bm_j;m_{j+k}m_{j+\ell}}(z;\bt_{\bm_j}),\qquad k,\ell\in\{1,\dots,n-j\}.
\end{gather}
We aim to show that
$\cR^{[n-2]}_{\bm}(z;\bt_\bm)^{-1}=\cM_{n-1,n}\big(
\cR_{\bm}(z;t_\bm)^{-1} \big)$. Arguing as above, we have
$\cR^{[n-2]}_{\bm}(z;\bt_\bm)^{-1}=\cM_{2,3}\big(
\cR^{[n-3]}_{\bm}(z;t_\bm)^{-1} \big)$. Applying analogously the
Sylvester determinant identity, we also obtain
\[\cR^{[j+1]}_{\bm}(z;\bt_\bm)^{-1}=\cM_{2,n-j}\big( \cR^{[j]}_{\bm}(z;t_\bm)^{-1} \big)\]
for $j\in\{0,\dots, n-3\}$. The result then follows by iteration.
\end{proof}

\begin{proof}[Proof of Proposition~\ref{prop:taurec}]
Throughout this proof, we are going to omit the explicit dependence on $z$ and $\bt_\bm$ of the objects, which must be understood from the dependence on $\bm$, i.e., we will write $\cR_\bm$ instead of $\cR_\bm(z;\bt_\bm)$.
By definition, we have
\begin{gather*}
 \cR_\bm=\left(
 \begin{array}{@{}ccc|c@{}}
 &&&\\
 &\cR_\hbm && \vdots \\
 &&&\\ \hline
 &\cdots& & 1+t_{m_n} R_{m_nm_n}
 \end{array}\right).
\end{gather*}
Applying the expression for the inverse with the adjoint of the cofactors matrix, we obtain that
\begin{gather*}
 \big[\cR_\bm^{-1}\big]_{nn}=\frac{\det{\cR_\hbm}}{\det{\cR_{\bm_n}}}=\frac{\tau_{\hbm}}{\tau_{\bm_n}}.
\end{gather*}
Applying Lemma~\ref{prop:Rnrec} after computing the inverse of the matrix in the left hand side of~\eqref{eq:M12R} and using the recursion \eqref{eq:Rndef} allows to prove that the $\tau_\bm$ and $\tau_{\bm_{n-1}}$ satisfy the recursion relation~\eqref{eq:taurec}. Since $\tilde\tau_{\bm_1}=\tau_{\bm_1}$, we see that~\eqref{eq:ttautau} holds.

In order to prove \eqref{eq:tpp}, we first observe that \eqref{eq:Prec} can be rewritten as
\begin{gather*}
 \tilde P_{\bm;i}=\frac{\tau_{(\bm;i)}}{\tau_{\hbm}}
 \left[\left(\cR^{[n-1]}_{(\bm,i)}\right)^{-1}
 \big(\tilde{P}_{\hbm;m_n},\tilde{P}_{\hbm;i} \big)^{\rm T}\right]_1,
\end{gather*}
where $\cR^{[n-1]}_{(\bm,i)}=\cR^{[n-1]}_{(\bm,i)}(z;\bt_{(\bm,i)})$ is given by~\eqref{eq:cRj}.
The proof follows by induction. It is clear that~\eqref{eq:tpp} holds for~$\bm_1$ by definition. We assume that it also holds for $\bm_j$ with $j=1, \dots, n-1$, and we must prove it also hods for~$\bm_n$. We start by proving that,
\begin{align}\label{eq:induction}
\left(\cR^{[n-1]}_{(\bm,i)}\right)^{-1}
 \begin{pmatrix}P_{\hbm;m_n}\\P_{\hbm;i} \end{pmatrix}
=\frac{\tau_{\hbm}}{\tau_{\tbm}}
 \left(\left(\cR^{[n-2]}_{(\bm,i)}\right)^{-1}
 \begin{pmatrix}P_{\tbm;m_{n-1}}\\P_{\tbm;m_{n}}\\P_{\tbm;i} \end{pmatrix}\right)^{\langle 1\rangle},
\end{align}
where in the right hand side we have the last two components of a vector of three entries.
This may be verified as follows. First note that, by assumption, we have $\tilde{P}_{\bm_j;i}=P_{\bm_j;i}$ for all $i\in\Nz$ and $j=1,\dots,n-1$. By Lemma~\ref{prop:Rnrec} and \eqref{eq:Prec}, the left hand side is equal to
\begin{gather*}
\cM_{2,3}\left(\left(\cR^{[n-2]}_{(\bm,i)}\right)^{-1}\right)
\begin{pmatrix}
-t_{m_{n-1}}R_{\tbm;m_{n-1},m_n} & \dfrac{\tau_{\hbm}}{\tau_{\tbm}} &0
\\
-t_{m_{n-1}}R_{\tbm;m_{n-1},i}) & 0&
\dfrac{\tau_{\hbm}}{\tau_{\tbm}}
\end{pmatrix}
 \begin{pmatrix}P_{\tbm;m_{n-1}}\\P_{\tbm;m_{n}}\\P_{\tbm;i} \end{pmatrix},
\end{gather*}
where we have used by \eqref{eq:taurec}
\begin{gather*}
 1+t_{m_{n-1}}R_{\tbm;m_{n-1},m_{n-1}}=\frac{\tau_{\hbm}}{\tau_{\tbm}}.
\end{gather*}
Now we can conclude that
\begin{gather}\label{eq:prodmat}
 \cM_{2,3}\left(\left(\cR^{[n-2]}_{(\bm,i)}\right)^{-1}\right)
\begin{pmatrix}
-t_{m_{n-1}}R_{\tbm;m_{n-1},m_n} & \dfrac{\tau_{\hbm}}{\tau_{\tbm}} &0
\\
-t_{m_{n-1}}R_{\tbm;m_{n-1},i} & 0&
\dfrac{\tau_{\hbm}}{\tau_{\tbm}}
\end{pmatrix}
\end{gather}
corresponds to the last two rows of $\left(\cR^{[n-2]}_{(\bm,i)}\right)^{-1}$ multiplied by $\frac{\tau_{\hbm}}{\tau_{\tbm}}$.
For the second block, corresponding to $\cM_{2,3}\left(\left(\cR^{[n-2]}_{(\bm,i)}\right)^{-1}\right)$, the correspondence is clear.
In order to verify the result for the elements $(2,1)$ and $(3,1)$ of $\left(\cR^{[n-2]}_{(\bm,i)}\right)^{-1}$, we identify the components of the second matrix in~\eqref{eq:prodmat} as elements of $\cR^{[n-2]}_{(\bm,i)}$ and we rely on the fact that we are multiplying elements of the matrix $\cR^{[n-2]}_{(\bm,i)}$ and its inverse. In fact, for instance the element $(2,1)$ corresponds to
\begin{gather*}
-\left[\left(\cR^{n-2}_{(\bm,i)}\right)^{-1}\right]_{2,2}
 \left[\cR^{[n-2]}_{(\bm,i)}\right]_{2,1}
 - \left[\left(\cR^{[n-2]}_{(\bm,i)}\right)^{-1}\right]_{2,3}
 \left[\cR^{[n-2]}_{(\bm,i)}\right]_{3,1}\\
 \qquad=-\big[\mathcal{I}\big]_{2,1}+\left[\left(\cR^{[n-2]}_{(\bm,i)}\right)^{-1}\right]_{2,1}
 \left[\cR^{[n-2]}_{(\bm,i)}\right]_{1,1}\\
 \qquad=\big(1+t_{m_{n-1}}R_{\tbm;m_{n-1}m_{n-1}}\big) \left[\left(\cR^{[n-2]}_{(\bm,i)}\right)^{-1}\right]_{2,1}
 =\frac{\tau_{\bm_{n-1}}}{\tau_{\bm_{n-2}}}\left[\left(\cR^{[n-2]}_{(\bm,i)}\right)^{-1}\right]_{2,1},
\end{gather*}
where $\mathcal I$ is the identity matrix.
Similarly to \eqref{eq:induction}, we can show the following identity for ${j=1, \dots, n-2}$:
\begin{gather*}
 \left(\cR^{[j]}_{(\bm,i)}\right)^{-1}
 \begin{pmatrix}P_{\bm_j;m_{j+1}}\\ \vdots\\P_{\bm_j;m_{n}}\\P_{\bm_j;i} \end{pmatrix}
 =\frac{\tau_{\bm_j}}{\tau_{\bm_{j-1}}}\left(\left(\cR^{[j-1]}_{(\bm,i)}\right)^{-1}
 \begin{pmatrix}P_{\bm_{j-1};m_{j}}\\ \vdots\\P_{\bm_{j-1};m_{n}}\\P_{\bm_{j-1};i} \end{pmatrix}\right)^{\langle 1\rangle}.
\end{gather*}

Combining the previous identities yields \eqref{eq:tpp}:
\begin{align*}
 \tilde P_{\bm;i} &=\frac{\tau_{(\bm;i)}}{\tau_{\hbm}}
 \left[\left(\cR^{[n-1]}_{(\bm,i)}\right)^{-1}
 \big(\tilde{P}_{\hbm;m_n},\tilde{P}_{\hbm;i} \big)^{\rm T}\right]_1,
\\& = \frac{\tau_{(\bm;i)}}{\tau_{\bm_{n-2}}}
 \left[\left(\cR^{[n-2]}_{(\bm,i)}\right)^{-1}
 \big({P}_{\bm_{n-2};m_{n-1}},{P}_{\bm_{n-2};m_{n}},{P}_{\bm_{n-2};i} \big)^{\rm T}\right]_1
 =\cdots \\
 & = \tau_{(\bm;i)}
 \left[\left(\cR_{(\bm,i)}\right)^{-1}
 \big({P}_{m_1},\dots {P}_{m_{n}},{P}_{i} \big)^{\rm T}\right]_1
 = P_{\bm;i}.
\end{align*}

Finally, relation \eqref{eq:Rint} follows by Proposition~\ref{prop:rho2step} and by induction on $j$.
\end{proof}

Proposition~\ref{prop:taurec} together with the results in Section~\ref{sec:1pair} allow now to prove the main theorems.

\begin{proof}[Proof of Theorems~\ref{thm:Teigen} and~\ref{thm:Portho}]
 The key to the proof is to observe that starting form the classical Legendre operator, the application of a rational confluent Darboux transformation indexed by an integer $m_i$ introduces an extra real parameter $t_{m_i}$ and leads to a well defined Sturm--Liouville problem defining a family of exceptional Legendre polynomials. Indeed, the equivalence of the objects defined by the CDT recursion \eqref{eq:rhoij}--\eqref{eq:piijdef} and those defined by matrix multiplication in Definition~\ref{def:objects} show that for any $\bm\in\Nz^n$, $\tau_\bm(z;\bt_\bm)$ and $P_{\bm;i}(z;\bt_\bm)$ are polynomials and $\cR_\bm(z;\bt_\bm)$ is a matrix of rational functions that satisfy the premises of Propositions~\ref{prop:RCDT}--\ref{prop:recortho} (see Remark~\ref{rem:asum}). Theorem~\ref{thm:Teigen} follows then from Proposition~\ref{prop:piij} and Theorem~\ref{thm:Portho} follows by induction from Proposition~\ref{prop:recortho}, which establishes the bounds on the parameters~\eqref{eq:tconstraints} that ensure the regularity of $T_\bm(\bt_\bm)$ and the positivity of the measure in $[-1,1]$
\end{proof}

As mentioned above, the degree of the $i$-th exceptional Legendre polynomial $P_{\bm;i}$ indexed by $\bm=(m_1,\dots,m_n)\in\Nz^n$ is not~$i$. The next proposition provides this result.

\begin{prop} \label{prop:degPtau}
 Let $\bm=(m_1,{\ldots}, m_\myell)$ and suppose that $m_1,{\ldots}, m_\myell$ are distinct. Let~$\tau_\bm$,~$P_{\bm;i}$ be as
 defined in \eqref{eq:taudef} and~\eqref{eq:Pimdef}. Then,
 \begin{gather} \label{eq:degtau}
 \deg \tau_{\bm}(z;\bt_\bm) = 2(m_1+\ldots + m_\myell)+\myell,\\
\deg P_{\bm;i}(z;\bt_\bm) = 2(m_1+\ldots + m_\myell)+\myell+i\nonumber\\
 \hphantom{\deg P_{\bm;i}(z;\bt_\bm)=}{} -(\delta_{i,m_1}+ \cdots +
 \delta_{i,m_n})(2i+1),\qquad i\in \Nz. \label{eq:degPn}
 \end{gather}
 Moreover,
 \begin{equation} \label{eq:Pmi}
 P_{\bm;m_k}(z;\bt_\bm) = P_{\bm^{\langle k \rangle};m_k}(z;\bt_{{\bm^{\langle k \rangle}}}) ,\qquad
 k=1,\ldots, \myell,
 \end{equation}
 where $\bm^{\langle k \rangle}\in \Nz^{n-1}$ denotes the tuple obtained by removing the $k\supth$ entry of~$\bm$.
\end{prop}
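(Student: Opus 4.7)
The plan is to work directly with the determinantal definitions. From~\eqref{eq:Qmdef}--\eqref{eq:Pimdef} and Cramer's rule, one has $P_{\bm;i}(z;\bt_\bm)=\det\widetilde\cR$, where $\widetilde\cR$ is the $(n+1)\times(n+1)$ matrix obtained from $\cR_{(\bm,i)}(z;\bt_{(\bm,i)})$ by replacing its last column with the vector of Legendre polynomials $(P_{m_1},\dots,P_{m_\myell},P_i)^{\rm T}$. I would carry out three steps: first prove the reduction~\eqref{eq:Pmi} by a row operation on $\widetilde\cR$; second, compute $\deg\tau_\bm$ by a Leibniz expansion of $\det\cR_\bm$ combined with a Cauchy-determinant calculation; and third, obtain $\deg P_{\bm;i}$ by the same method when $i\notin\{m_1,\ldots,m_\myell\}$ and by reduction through~\eqref{eq:Pmi} when $i=m_k$.

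For~\eqref{eq:Pmi}: when $i=m_k$, the $k$-th and $(n+1)$-th components of the augmented tuple $(\bm,m_k)$ both equal $m_k$, so rows $k$ and $n+1$ of $\widetilde\cR$ coincide in the last column (both equal to $P_{m_k}$) and have an identical $R$-part in the first $n$ columns, differing only through their Kronecker-delta contributions. Subtracting row $n+1$ from row $k$ therefore produces a row equal to the standard basis vector $e_k$. Expanding the determinant along this row yields the $n\times n$ minor obtained by deleting row $k$ and column $k$ (with sign $(-1)^{k+k}=1$); relabeling the surviving indices in the order-preserving way identifies this minor precisely with the matrix defining $P_{\bm^{\langle k\rangle};m_k}$.

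For $\deg\tau_\bm$: expand $\det\cR_\bm$ by Leibniz and split each entry $[\cR_\bm]_{k\ell}=\delta_{k\ell}+t_{m_\ell}R_{m_k m_\ell}$ into its two pieces, indexing the resulting sum by pairs $(\sigma,S)$ with $\sigma\in S_n$ and $S\subseteq\{1,\ldots,n\}$ recording which entries retain the $R$-part (the complementary entries forcing $\sigma$ to fix their index). The degree of such a term is $2\sum_{k\in S}m_k+|S|$, uniquely maximized at $2\sum_k m_k+n$ when $S=\{1,\ldots,n\}$. Summing the top-degree coefficients over $\sigma\in S_n$ produces $\prod_k t_{m_k}\cdot\det(c_{m_k m_\ell})$, where $c_{ij}=k_i k_j/(i+j+1)$ is the leading coefficient of $R_{ij}$ and $k_i$ is the leading coefficient of $P_i$. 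Extracting the diagonal factors $k_{m_j}$ from rows and columns reduces this to the Cauchy matrix $1/(m_k+m_\ell+1)$ with distinct nodes $m_j+\tfrac12$, whose determinant is nonzero by the classical Cauchy formula. This yields~\eqref{eq:degtau}.

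For~\eqref{eq:degPn} with $i\notin\{m_1,\ldots,m_\myell\}$: set $\mu=(\bm,i)$, so the $\mu_j$ are all distinct. The same Leibniz expansion applied to $\widetilde\cR$ shows every top-degree contribution has degree $2\sum m_k+n+i$, irrespective of which row supplies the $P_{\mu_j}$ factor in column $n+1$. After analogous factoring, the leading coefficient reduces to $\prod_{\ell=1}^n t_{m_\ell}\cdot\det N$, where $N$ is the $(n+1)\times(n+1)$ matrix with $N_{j\ell}=1/(\mu_j+\mu_\ell+1)$ for $\ell\leq n$ and $N_{j,n+1}=1$. I would verify $\det N\neq 0$ by writing it as $\lim_{b\to\infty}b\,\det N_b$, with $N_b$ the genuine Cauchy matrix obtained by replacing the last column by $1/(\mu_j+b)$, and applying the classical Cauchy formula to evaluate the limit; distinctness of the $\mu_j$ (automatic here) guarantees a nonzero limit. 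In the remaining case $i=m_k$, formula~\eqref{eq:Pmi} combined with the previous case applied to $\bm^{\langle k\rangle}$ yields exactly the exponent predicted by~\eqref{eq:degPn}. The main obstacle throughout is the nonvanishing of these Cauchy-type determinants, which is precisely where the distinctness hypothesis on $m_1,\ldots,m_\myell$ is essential.
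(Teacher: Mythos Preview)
Your argument is correct and takes a genuinely different route from the paper's. The paper proves~\eqref{eq:Pmi} by invoking the recursive description of $P_{\bm;i}$ established in Proposition~\ref{prop:taurec} (specifically~\eqref{eq:Prec} with $i=m_n$, together with symmetry under permutations of $\bm$), and then obtains~\eqref{eq:degtau}--\eqref{eq:degPn} by induction on $n$: the base case follows from $\deg R_{i_1i_2}=i_1+i_2+1$, and the inductive step tracks degrees through one application of~\eqref{eq:taurec} and~\eqref{eq:Prec}, checking that the two top-order contributions in~\eqref{eq:Prec} do not cancel.

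By contrast, you work directly with the determinantal definitions~\eqref{eq:taudef}--\eqref{eq:Pimdef} and bypass the recursive machinery of Proposition~\ref{prop:taurec} altogether. Your proof of~\eqref{eq:Pmi} via a single row subtraction is cleaner and more transparent than reading it off from~\eqref{eq:Prec}. For the degree formulas, you replace the paper's inductive ``no cancellation'' check (which is stated but not fully spelled out there) by an explicit evaluation of the leading coefficient as a Cauchy-type determinant; this makes the role of the distinctness hypothesis on $m_1,\ldots,m_n$ completely explicit, since it is exactly what guarantees the Cauchy nodes $m_j+\tfrac12$ are distinct. The limit trick for the bordered matrix $N$ is a nice device to reduce the $P_{\bm;i}$ case to a genuine Cauchy determinant. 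The trade-off is that your approach requires the classical Cauchy determinant formula as external input, whereas the paper's induction is elementary once Proposition~\ref{prop:taurec} is in hand; but your argument is self-contained in the sense that it does not rely on that proposition at all.
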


Notice that \eqref{eq:Pmi} accounts for the above Kronecker delta terms in \eqref{eq:degPn}. It is also worth noting that, as opposed to the ``traditional'' exceptional families, the degree sequence for exceptional Legendre polynomials is not an increasing sequence, which is further evidence of the different construction.
\begin{proof} The identity \eqref{eq:Pmi} follows directly from applying~\eqref{eq:Prec} to this specific choice and the symmetry with respect to permutations in~$\bm$.

Relations \eqref{eq:degtau} and \eqref{eq:degPn} can be proved by induction. It is clear that they hold for $\bm_1=m_1$, since $\deg R_{i_1i_2}(z)=i_1+i_2+1$. Notice that in $P_{m_1;i}(z;t_{m_1})$, the coefficients of $R_{m_1m_1}(z)P_i(z)$ and $R_{m_1i}(z)P_{m_1}(z)$ do not coincide if $i\neq m_1$, so $\deg P_{m_1;i}(z;t_{m_1})=2m_1+i+1$. If $i=m_1$, by~\eqref{eq:Pmi} we obtain $\deg P_{m_1;m_1}(z;t_{m_1})=\deg P_{m_1}(z)=m_1$

Now we assume \eqref{eq:degtau} and \eqref{eq:degPn} hold for $\bm_{j-1}$. By \eqref{eq:Rint}, the degree of $R_{\bm_{j-1};im_{j}}(z;\bt_{\bm_{j-1}})$, understood as the difference between the degree of the polynomial numerator and of the polynomial denominator, is $m_{j}+i+1$ if $i\neq m_1,\dots,m_{j-1}$. Then,
\[ \deg\tau_{\bm_{j}}(z;\bt_{\bm_{j}})=\deg\tau_{\bm_{j-1}}(z;\bt_{\bm_{j-1}})+2m_{j}+1=2(m_1+\dots+m_j)+j.\]
 Arguing as above to verify that there is no cancellation between the highest order contributions, we obtain
\[\deg P_{\bm_{j};i}(z;\bt_{\bm_{j}})=2(m_1+\dots+m_{j-1})+(j-1)+2m_j+i+1\]
 if $i\neq m_1,\dots,m_{j}.$
 In order to prove the result for $i$ equal to some component of $\bm_j$, we employ relation~\eqref{eq:Pmi}.
\end{proof}

\begin{rmk}From Proposition~\ref{prop:degPtau} we see that the codimension (number of missing degrees) of the exceptional Legendre family indexed by $\bm=(m_1,\dots,n)$ is $2(m_1+\cdots+m_n)+n$. This coincides with the degree of~$\tau_\bm$, as it happens for all exceptional polynomials~\cite{GFGM19}.
\end{rmk}

So far we have considered the case when $\bm=(m_1,\dots,m_n)$ contains distinct indices. As announced in Remark~\eqref{rmk:degenerate}, let us show that this choice entails no loss of generality.

\begin{prop} \label{prop:duplicates} Let $\bm\in\Nz^n$ and let $\tau_\bm(z;\bt_\bm)$ and $P_{\bm;i}(z;\bt_\bm)$ be as defined in~\eqref{eq:taudef} and~\eqref{eq:Pimdef}. Then, for any $j\in\Nz$ we have
\begin{gather*}
 \tau_{(\bm,j,j)}\big(z;(\bt_\bm,t_j,t_j)\big) = \tau_{(\bm,j)}\big(z;(\bt_\bm,2t_j)\big) ,\\
 P_{(\bm,j,j);i}\big(z;(\bt_\bm, t_j,t_j)\big) = P_{(\bm,j);i}\big(z;(\bt_\bm, 2t_j)\big) .
\end{gather*}
\end{prop}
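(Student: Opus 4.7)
The cleanest route is via the recursive construction of Definition~\ref{def:recur}, which by Proposition~\ref{prop:taurec} agrees with Definition~\ref{def:objects}. Appending the index $j$ twice at the tail of the tuple $\bm$ just means applying the recursions~\eqref{eq:Rndef}--\eqref{eq:Prec} twice, each time with the same choice of index $j$ and parameter $t_j$; the non-trivial claim is that the net effect equals a single recursion step with parameter $2t_j$. The whole proof reduces to one algebraic simplification of the $R$-recursion and a direct cancellation.

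The key computation comes from~\eqref{eq:Rndef} specialized with $i_1=i_2=j$ at the step that appends the first~$j$:
\[
R_{(\bm,j);j,j}(z;(\bt_\bm,t_j))
= R_{\bm;j,j}-\frac{t_j R_{\bm;j,j}^{\,2}}{1+t_j R_{\bm;j,j}}
=\frac{R_{\bm;j,j}}{1+t_j R_{\bm;j,j}},
\]
which yields the identity
\[
1+t_j R_{(\bm,j);j,j}(z;(\bt_\bm,t_j))=\frac{1+2t_j R_{\bm;j,j}}{1+t_j R_{\bm;j,j}}.
\]
Plugging this into~\eqref{eq:taurec} at the step appending the second $j$, together with $\tau_{(\bm,j)}(z;(\bt_\bm,t_j))=(1+t_j R_{\bm;j,j})\tau_\bm$ from~\eqref{eq:taurec} at the previous step, the factor $1+t_j R_{\bm;j,j}$ cancels and one obtains $\tau_{(\bm,j,j)}(z;(\bt_\bm,t_j,t_j))=(1+2t_j R_{\bm;j,j})\tau_\bm$, which is precisely the right-hand side of~\eqref{eq:taurec} applied once to $(\bm,j)$ with parameter $2t_j$. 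This settles the first identity.

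For the polynomial identity the same strategy works. Two auxiliary facts are needed: \eqref{eq:Rndef} with $i_1=i$, $i_2=j$ gives $R_{(\bm,j);i,j}(z;(\bt_\bm,t_j))=R_{\bm;i,j}/(1+t_j R_{\bm;j,j})$, and \eqref{eq:Prec} with $i=j$ gives the self-reproducing identity $P_{(\bm,j);j}(z;(\bt_\bm,t_j))=P_{\bm;j}(z;\bt_\bm)$ (in agreement with~\eqref{eq:Pmi}). Writing \eqref{eq:Prec} at the step appending the second $j$,
\[
P_{(\bm,j,j);i}=\big(1+t_j R_{(\bm,j);j,j}\big)\,P_{(\bm,j);i}(z;(\bt_\bm,t_j))-t_j R_{(\bm,j);i,j}\,P_{\bm;j},
\]
expanding $P_{(\bm,j);i}(z;(\bt_\bm,t_j))=(1+t_j R_{\bm;j,j})P_{\bm;i}-t_j R_{\bm;i,j}P_{\bm;j}$ by~\eqref{eq:Prec} at the previous step, and substituting the rational expressions above, the coefficients of $P_{\bm;j}$ combine as $[(1+2t_jR_{\bm;j,j})+1]/(1+t_jR_{\bm;j,j})=2$, yielding
\[
P_{(\bm,j,j);i}(z;(\bt_\bm,t_j,t_j))=(1+2t_j R_{\bm;j,j})P_{\bm;i}-2t_j R_{\bm;i,j}P_{\bm;j},
\]
which is exactly the right-hand side of~\eqref{eq:Prec} applied once to $(\bm,j)$ with parameter $2t_j$, i.e., $P_{(\bm,j);i}(z;(\bt_\bm,2t_j))$. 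The only obstacle in this plan is the algebraic bookkeeping of tuples and parameter lists; no structural ingredient beyond the three recursions~\eqref{eq:Rndef}--\eqref{eq:Prec} and the explicit form of $R_{(\bm,j);j,j}$ is required.
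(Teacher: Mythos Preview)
Your proof is correct and follows essentially the same approach as the paper: both apply the recursions of Definition~\ref{def:recur} (via Proposition~\ref{prop:taurec}) twice with the same index $j$ and simplify algebraically, the key step being the identity $R_{(\bm,j);j,j}=R_{\bm;j,j}/(1+t_jR_{\bm;j,j})$ and its companion for $R_{(\bm,j);i,j}$. Your organization of the algebra (first isolating $1+t_jR_{(\bm,j);j,j}=(1+2t_jR_{\bm;j,j})/(1+t_jR_{\bm;j,j})$) is slightly tidier than the paper's, but the argument is substantively the same.
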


We see thus that the repeated application of a 2-step confluent Darboux transformation at the same eigenvalue only serves to modify the deformation parameter. In general, if the two parameters at the repeated $j$ are different, we would have similarly
 \begin{gather*}
 \tau_{(\bm,j,j)}(z;(\bt_\bm,t_j,t_j')= \tau_{(\bm,j)}(z;(\bt_\bm,t_j+t'_j))),\\
 P_{(\bm,j,j);i}(z;(\bt_\bm, t_j,t_j')) = P_{(\bm,j);i}(z;(\bt_\bm, t_j+t'_j).
 \end{gather*}

\begin{proof} We omit again explicit dependence on $z$ and $t_{\bm}, t_j$ if no confusion arises.
 We apply Proposition~\ref{prop:taurec} and Definition~\ref{def:recur} twice to obtain
 \begin{gather*}
 \tau_{(\bm,j,j)}=(1+t_jR_{(\bm,j);jj})\tau_{(\bm,j)} =\left(1+t_jR_{\bm;jj}-\frac{t_j^2 R_{\bm;jj}^2}{1+t_j R_{\bm;jj}}\right)(1+t_jR_{\bm;jj})\tau_\bm\\
\hphantom{\tau_{(\bm,j,j)}}{} =(1+2t_jR_{\bm;jj})\tau_\bm=\tau_{(\bm,j)}\big((z;(\bt_\bm,2t_j)\big), \\
 P_{(\bm,j,j);i}=(1+t_jR_{(\bm,j);jj})P_{(\bm,j);i} -t_jR_{(\bm,j);ij}P_{(\bm,j);j} \\
 \hphantom{P_{(\bm,j,j);i}}{}
 =(1+t_jR_{(\bm,j);jj})\big((1+t_jR_{\bm;jj})P_{\bm;i}-t_jR_{\bm;ij}P_{\bm;i}\big) \\
 \hphantom{P_{(\bm,j,j);i}=}{} -t_j\left(R_{\bm;ij}-\frac{t_j R_{\bm;jj}R_{\bm;ij}}{1+t_jR_{\bm;jj}}\right)P_{\bm;j}
 \\
 \hphantom{P_{(\bm,j,j);i}}{} =(1+2t_jR_{\bm;jj})P_{\bm;i}-2t_jR_{\bm;ij} P_{\bm;j} =P_{(\bm,j);i}(z;(\bt_\bm, 2t_j)).\tag*{\qed}
 \end{gather*}\renewcommand{\qed}{}
\end{proof}

\section{Examples}\label{sec:examples}

To conclude, we present some examples of exceptional Legendre polynomials and orthogonality relations for the cases of $n=1$ and $n=2$.
\subsection{The 1-parameter exceptional Legendre family}

In the 1-parameter case, we have $\bm=({m_1})\in \Nz$, and
\begin{gather*}
 \tau_{m_1}(z;t_{m_1}) = 1+ t_{m_1} R_{{m_1}{m_1}}(z),\\
 P_{{m_1};i}(z;t_{m_1}) = (1 + t_{m_1} R_{{m_1}{m_1}}(z))P_i(z) -
 t_{m_1} R_{i{m_1}}(z)P_{m_1}(z),\qquad i\in \Nz.
\end{gather*}
Note that $P_{{m_1};{m_1}}(z) = P_{m_1}(z)$. The degrees of the other polynomials are
\[ \deg P_{{m_1};i}(z) = i+2{m_1}+1,\qquad i\neq m_1. \]
The corresponding exceptional operator is $T_{m_1}(t_{m_1}) = \hT(\tau_{m_1}(z;t_{m_1}))$
with the latter as per~\eqref{eq:hTdef}.

The polynomial $\tau_{m_1}(z;t_{m_1})$ does not vanish in $[-1,1]$ provided that
\begin{equation*}
 t_{m_1}>-\frac{1}{R_{m_1m_1}(1)}=-m_1-\frac 1 2.
\end{equation*}
In this case, $\{P_{{m_1};i}(z;t_{m_1})\}_{i\in\Nz}$ is a family of exceptional Legendre polynomials, with orthogonality weight
\begin{gather*}
 W_{m_1}(z;t_{m_1})=\frac{1}{\tau_{m_1}(z;t_{m_1})^2}.
\end{gather*}
The above set is a complete orthogonal polynomial basis of the space $\rL^2([-1,1],W_{m_1}(z;t_{m_1}){\rm d}z)$.
The orthogonality relations are
\begin{gather*}
 \int_{-1}^1 \frac{P_{{m_1};i_1}(u;t_{m_1}) P_{{m_1};i_2}(u;t_{m_1})}{\tau_{m_1}(u;t_{m_1})^2} {\rm d}u
= \frac{2}{1+2i_1}\delta_{i_1i_2} ,\qquad i_1,i_2\in\Nz\backslash\{ m_1\} ,\\
 \int_{-1}^1 \frac{P_{{m_1};{m_1}}(u;t_{m_1})^2}{\tau_{m_1}(u;t_{m_1})^2} {\rm d}u= \frac{2}{1+2{m_1}+2t_{m_1}} .
\end{gather*}
The above example illustrates perfectly the isospectral nature of the
CDT that relates $T(1)\to T_{m_1}(t_{m_1})$. The eigenvalues of the two
operators are the same. As for the eigenfunctions, if $t_{m_1}\neq 0$,
then for $i\neq {m_1}$ they are transformed, but their norms stay the
same. On the other hand, for~$i={m_1}$ the opposite happens: the
eigenfunction does not change but its norm does.

Since $\tau_{m_1}'(z)=t_{m_1} P_{m_1}^2(z)$, it follows that the weight $W_{m_1}(z)$ is a decreasing (resp. increasing) function for $t_{m_1}>0$ (resp. $t_{m_1}<0$), which has $m_1$ saddle points in $[-1,1]$ at the zeros of~$P_{m_1}$, but no local minima or maxima.

For instance, for $m_1=4$, we have
\[ \tau_{4}(z,t_{4}) = 1 + \frac{1}{576} t_{4} \big(64 + 81 z - 540 z^3 + 1998 z^5 - 2700 z^7 + 1225 z^9\big)\]
and the weight is shown in Fig.~\ref{fig:1}.
\begin{figure}[t] \centering
 \includegraphics[width=0.45\textwidth]{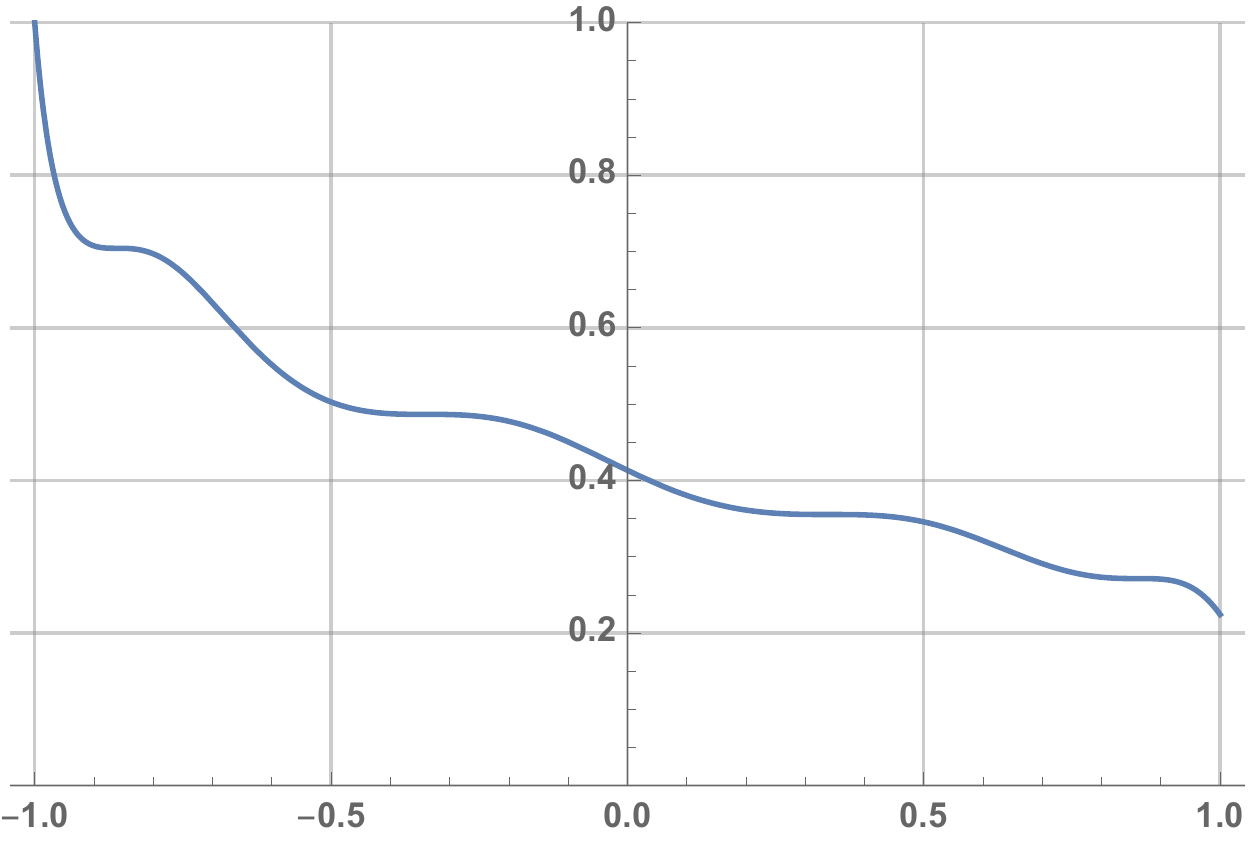}
 \caption{Exceptional Legendre weight $\tau_{m_1}^{-2}(z,t_{m_1})$ for $m_1=4$ and $t_{m_1}=5.2$.} \label{fig:1}
\end{figure}
The first few polynomials for this choice are
\begin{gather*}
P_{4;0}(z,t_{4})=1 + \frac{t_{4}}{144} \big(16 + 135 z^3 - 459 z^5 + 585 z^7 - 245 z^9\big),\\
P_{4;1}(z,t_{4})=z+\frac{t_{4}}{1152}\big(-9 + 128 z + 171 z^2 + 30 z^4 - 1314 z^6 + 2475 z^8 - 1225 z^{10}\big),\\
P_{4;2}(z,t_{4})=P_2(z)-\frac{t_{4}}{576} \big(32 - 96 z^2 + 189 z^3 - 756 z^5 + 1278 z^7 - 1300 z^9 + 525 z^{11}\big),\\
P_{4;3}(z,t_{4})=P_3(z)-\frac{t_{4}}{9216} \big(243 - 1536 z - 3402 z^2 + 2560 z^3 + 3645 z^4 + 7668 z^6 - 17955 z^8 \\
\hphantom{P_{4;3}(z,t_{4})=P_3(z)-\frac{t_{4}}{9216} \big(}{} + 16950 z^{10} - 6125 z^{12}\big),\\
P_{4;4}(z,t_{4})= P_4(z),\\
P_{4;5}(z,t_{4})= P_5(z)+\frac{t_{4}}{9216} \big(243 + 1920 z - 1215 z^2 - 8960 z^3 - 3645 z^4 + 8064 z^5 + 17145 z^6 \\
\hphantom{P_{4;5}(z,t_{4})= P_5(z)+\frac{t_{4}}{9216} \big(}{} - 42255 z^8 + 66171 z^{10} - 50855 z^{12} + 15435 z^{14}\big).
\end{gather*}
We display the above polynomials for $t_{4}=0$ (classical Legendre) and for $t_{4}=5.2$ in Fig.~\ref{fig:3}. Observe that all polynomials undergo a continuous deformation as $t_{4}$ changes, except for \mbox{$P_{4;4}=P_4$} that stays the same.
\begin{figure}[t] \centering
 \begin{tabular}{cc}
 \includegraphics[width=0.43\textwidth]{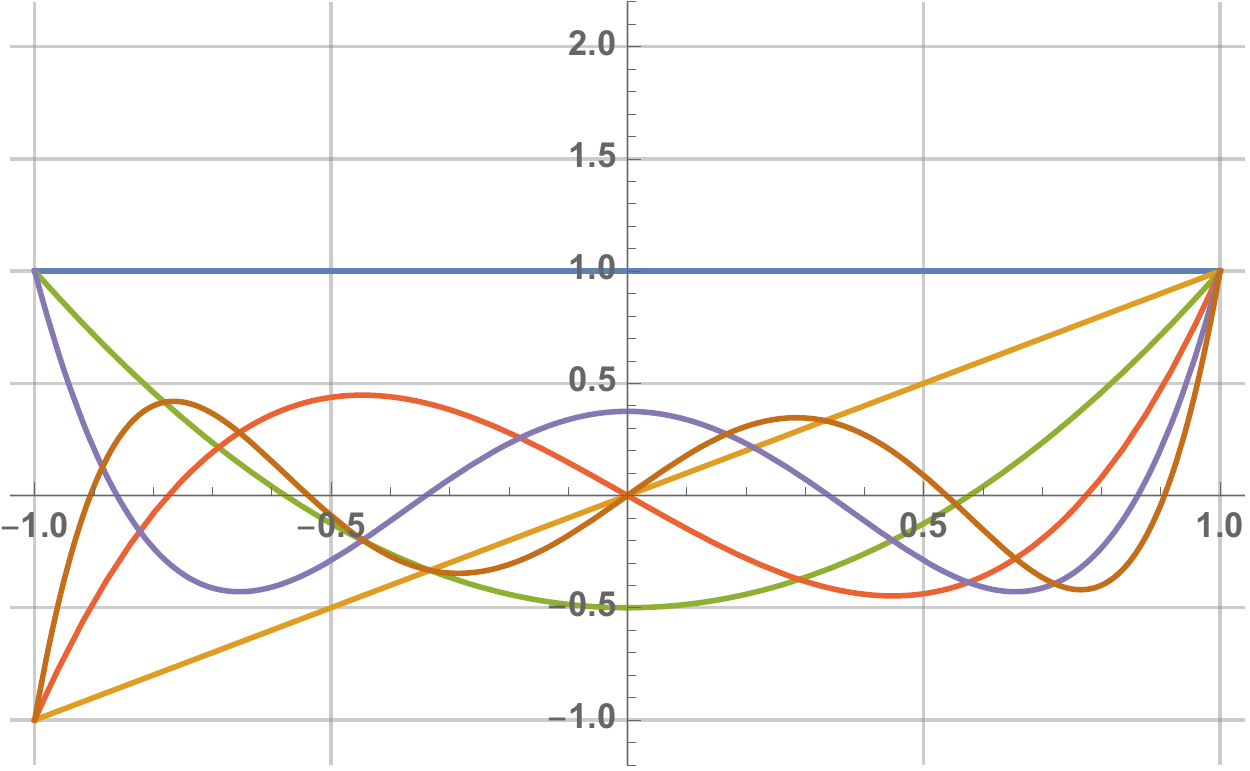} & \includegraphics[width=0.43\textwidth]{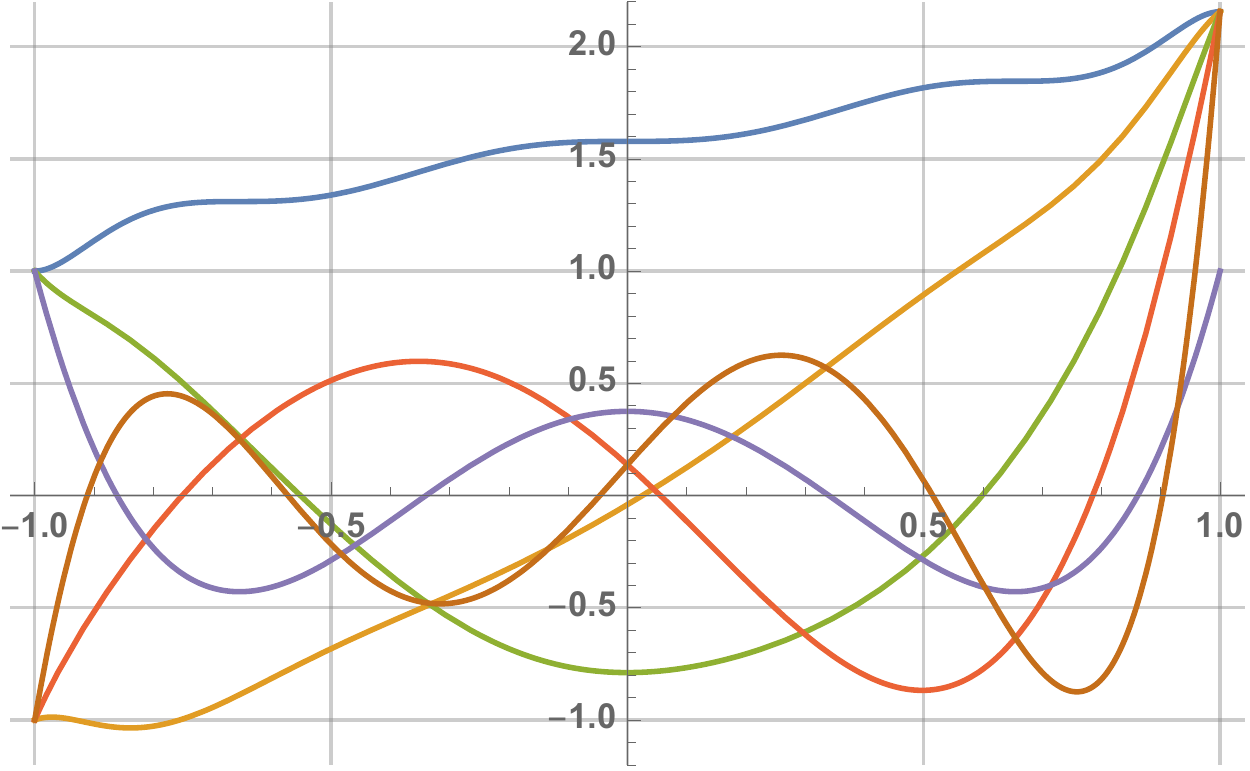}
 \end{tabular}
 \caption{First few exceptional Legendre polynomials $P_{m_1;i}(z;t_{m_1})$ for $m_1=4$, with $t_{m_1}=0$ (left) and $t_{m_1}=5.2$ (right).} \label{fig:3}
\end{figure}

\subsection{The 2-parameter exceptional Legendre family}

In the 2-parameter case, we have $\bm=(m_1,m_2)\in\Nz^2$, $\bt_\bm=(t_{m_1},t_{m_2})$ and
\begin{gather*}
 \tau_\bm(z;\bt_\bm)= \tau_{m_1}(z;t_{m_1})\tau_{m_2}(z;t_{m_2}) -t_{m_1} t_{m_2} R_{m_1m_2}(z)^2,\\
 P_{\bm;i}(z;\bt_\bm)= P_i(z) \tau_\bm(z;\bt_\bm)- P_{m_1}(z) t_{m_1}\tau_{m_2}(z;t_{m_2}) R_{m_1;m_2,i}(z;t_{m_2}) \\
\hphantom{P_{\bm;i}(z;\bt_\bm)=}{} - P_{m_2}(z) t_{m_2}\tau_{m_1}(z;t_{m_1})
 R_{m_1;m_2,i}(z;t_{m_1}), \qquad i\in\Nz,
\end{gather*}
where
\begin{gather*}
 R_{m_1;i_1i_2}(z;t_m)= \int_{-1}^z \frac{P_{m_1;i_1}(u;t_{m_1}) P_{{m_1};i_2}(u;t_{m_1})}{\tau_{m_1}(u;t_{m_1})^2}{\rm d}u\\
\hphantom{R_{m_1;i_1i_2}(z;t_m)}{} =R_{i_1i_2}(z) - \frac{t_{m_1} R_{i_1{m_1}}(z) R_{i_2{m_1}}(z)}{1+t_{m_1} R_{{m_1}{m_1}}} , \qquad i_1,i_2\in\Nz,
\end{gather*}
Supposing that $m_1\neq m_2$ we have
\begin{gather*}
 \deg P_{\bm;i}(z;\bt_\bm)= i+2m_1+2m_2+2,\qquad i\in \Nz\backslash \{ m_1, m_2\} ,\\
 \deg P_{\bm,m_1}(z;\bt_\bm)= m_1+2m_2+1, \\
 \deg P_{\bm;m_2}(z;\bt_\bm) = m_2+2 m_1+1.
\end{gather*}
The polynomial $\tau_{\bm}(z;t_{m_1,t_{m_2}})$ does not vanish in $[-1,1]$ provided that
\begin{gather*}
t_{m_1}>-m_1-\frac 1 2,\qquad t_{m_2}>-m_2-\frac 1 2.
\end{gather*}
In this case, $\{P_{\bm;i}(z;\bt_\bm)\}_{i\in\Nz}$ is a family of exceptional Legendre polynomials, with orthogonality weight
\begin{gather*}
 W_{\bm}(z;\bt_\bm)=\frac{1}{\tau_{\bm}(z;\bt_\bm)^2}.
\end{gather*}
The above set is a complete orthogonal polynomial basis of the space $\rL^2 ([-1,1], W_{\bm}(z;\bt_\bm){\rm d}z)$.

The orthogonality relations are
\begin{gather*}
 \int_{-1}^1 \frac{P_{\bm;i_2}(u;t_\bm)
 P_{\bm;i_2}(u;\bt_\bm)}{\tau_\bm(u;\bt_\bm)^2} {\rm d}u
 = \frac{2}{1+2i_1}\delta_{i_1i_2} ,\qquad i_1,i_2\in\Nz\backslash \{m_1,m_2\} ,\\
 \int_{-1}^1 \frac{P_{\bm;m_1}(u;\bt_\bm)^2}{\tau_\bm(u;\bt_\bm)^2} {\rm d}u
 = \frac{2}{1+2m_1+2t_{m_1}} ,\\
 \int_{-1}^1 \frac{P_{\bm;m_2}(u;\bt_\bm)^2}{\tau_\bm(u;\bt_\bm)^2} {\rm d}u
 = \frac{2}{1+2m_2+2t_{m_2}} .
\end{gather*}
For instance, for $\bm=(m_1,m_1)=(1,2)$, we have
\begin{gather*}\tau_{(1,2)}\big(z;(t_1,t_2)\big)
= 1 + \frac{1}{3} t_{1} \big(1 + z^3\big) + \frac{1}{20} t_{2} \big(4 + 5 z - 10 z^3 + 9 z^5\big)\\
\hphantom{\tau_{(1,2)}\big(z;(t_1,t_2)\big) =}{}
 + \frac{1}{960} t_{1} t_{2} (1 + z)^4 \big(49 - 116 z + 110 z^2 - 36 z^3 + 9 z^4\big).
 \end{gather*}
For certain values of $\bt_\bm=(t_{m_1},t_{m_2})$ the weight is displayed in Fig.~\ref{fig:2}. To the best of our knowledge, this is the first example of an exceptional orthogonal polynomial system whose weight is not monotonic or unimodal.
\begin{figure}[t] \centering
\includegraphics[width=0.45\textwidth]{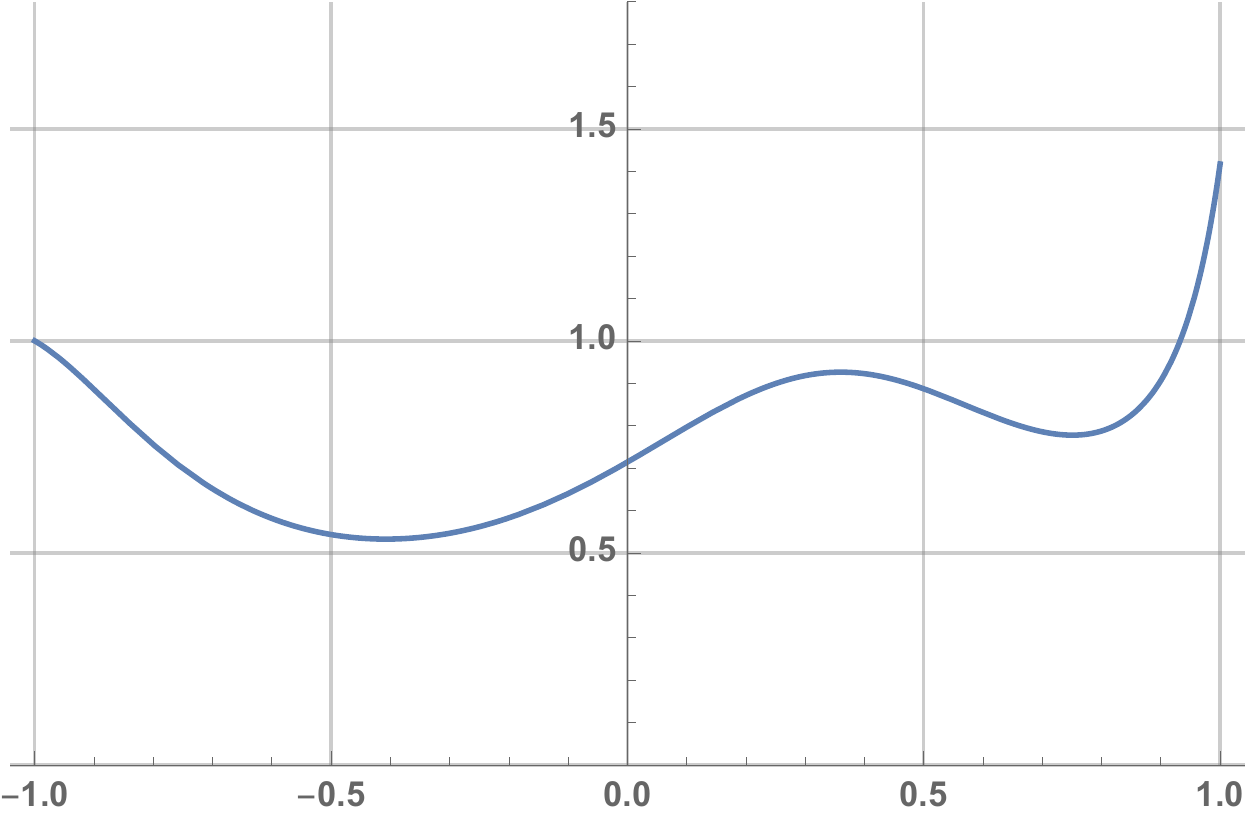}
\caption{Exceptional Legendre weight $\tau_{\bm}^{-2}(z,\bt_\bm)$ for $\bm=(m_1,m_2)=(1,2)$ and $(t_{m_1},t_{m_2})=(2,-1.6)$.} \label{fig:2}
\end{figure}

\section{Summary}
In this paper we show that the class of exceptional orthogonal polynomials is much larger than previously thought. A new construction
based on confluent Darboux transformations leads to new exceptional families with some similarities and differences with respect to the
other exceptional families. In common, they are also Sturm--Liouville problems with rational coefficients, each family is indexed by a set of integers, and it defines a complete basis of polynomial eigenfunctions, whose degree sequence has missing degrees. But as
opposed to the other exceptional families, there are no gaps in the spectrum and the new families contain an arbitrary number of real
deformation parameters, so the construction can be seen as an isospectral deformation of the classical operators. We illustrate the
new construction by describing the full class of exceptional Legendre polynomials, which cannot be derived through the standard
construction. The same method can be applied with minor modifications to the Jacobi operator. A more exhaustive description of these matters will be provided in a~forthcoming publication, together with a discussion of the implications for the classification of exceptional polynomials~\cite{GFGM21}.

\subsection*{Acknowledgements}

MAGF would like to thank the Max-Planck-Institute for Mathematics in the Sciences, Leipzig (Germany), where some of her work took place. DGU acknowledges support from the Spanish MICINN under grants PGC2018-096504-B-C33 and RTI2018-100754-B-I00 and the European Union under the 2014-2020 ERDF Operational Programme and by the Department of Economy, Knowledge, Business and University of the Regional Government of Andalusia (project FEDER-UCA18-108393).

\pdfbookmark[1]{References}{ref}
\LastPageEnding

\end{document}